\def\BibTeX{{\rm B\kern-.05em{\sc i\kern-.025em b}\kern-.08em
    T\kern-.1667em\lower.7ex\hbox{E}\kern-.125emX}}       
\pgfplotsset{compat=newest}
\newtheorem{lemma}{Lemma}
\newtheorem{proposition}{Proposition}
\newtheorem{theorem}{Theorem}
\newtheorem{definition}{Definition}
\newcommand*{\QEDB}{\hfill\ensuremath{\square}}%
\newtheorem{prop}{Proposition}
\newtheorem{assumption}{Assumption}
\newtheorem{prope}{Property}
\newtheorem{example}{Example}
\newtheorem{rem}{Remark}
\newcommand{\nats}{\mathbb{N}}
\newcommand{\Hone}{\mathcal{H}^1((0,1);\R^{n_x})}
\newcommand{\Ltwo}{\mathcal{L}^2((0,1); \R^{n_x})}
\newcommand{\Aop}{\mathscr{R}}
\newcommand{\A}{\mathcal{A}}
\newcommand{\Xz}{\partial_z x}
\newcommand{\spec}{\operatorname{spec}}
\newcommand{\Spn}{\mathbb{S}^{n_x}_{+}}
\newcommand{\Sp}{\mathbb{S}_{+}}
\newcommand{\Dp}{\mathbb{D}^{n}_{+}}
\newcommand{\Dpn}{\mathbb{D}^{n_x}_{+}}
\newcommand{\epx}{{\epsilon}_x}
\newcommand{\epchi}{{\epsilon}_\chi}
\newcommand{\X}{\mathcal{Z}}
\newcommand{\Id}{\mathbf{I}}
\newcommand{\ep}{{\varepsilon}}
\newcommand{\Xt}{\partial_t x}
\newcommand{\dom}{\operatorname{dom}}
\newcommand{\range}{\operatorname{range}}
\newcommand{\Co}{\operatorname{co}}
\newcommand{\rk}{\operatorname{rank}}
\newcommand{\He}{\operatorname{He}}
\def\be{\begin{equation}}
  \def\eeq{\end{equation}}
\def\<{\langle}
\def\>{\rangle}
\def\bold(#1){\textbf{#1}}
\def\roma(#1){\textrm{#1}}
\def\und(#1){\underline{#1}}
\def\ove(#1){\overline{#1}}
\def\mbold(#1){\mathbb{#1}}
\def\R{\mathbb{R}}
\title{
Unknown Input Observer Design for a class of Semilinear Hyperbolic Systems with Dynamic Boundary Conditions\\ (Extended Version)}
\author{Andrea Cristofaro$^\dagger$ and Francesco Ferrante$^\ddagger$
\thanks{$^\dagger$Department of Computer, Control and Management Engineering, Sapienza University of Rome, Italy, email: andrea.cristofaro@uniroma1.it}
\thanks{$\ddagger$ Department of Engineering, University of Perugia, Via G. Duranti, 67, 06125 Perugia, Italy, email: francesco.ferrante@unipg.it}
\thanks{Authors are listed in alphabetical order.}}
\begin{document}
\maketitle
\begin{abstract}            
The problem of unknown input observer design is considered for coupled  PDE/ODE systems subject to incremental sector bounded nonlinearities and unknown boundary inputs. 
Assuming available measurements at the boundary of the distributed domain, the synthesis of the unknown input observer is based on Lyapunov methods and convex optimization. Numerical simulations support and confirm the theoretical findings, illustrating the robust estimation performances of the proposed nonlinear unknown input observer.
\end{abstract}
\begin{IEEEkeywords}
Distributed parameter systems, Estimation, LMIs, Unknown input observers.
\end{IEEEkeywords}
\section{Introduction}
\subsection{Background and motivation}
Several complex physical processes are characterized by spatial distributed phenomena and, as such, their evolution is governed by partial differential equations. Examples may be found in hydraulic networks~\cite{dos2008boundary}, multiphase flow \cite{di2011slugging}, transmission networks \cite{fliess1999active}, road traffic networks \cite{hante2009modeling} or gas flow in pipelines \cite{dick2010classical}.\\ 
Actuators and sensors are typically placed at the boundary of the spatial domain, whereas the state variables in the interior are neither directly controlled nor measured.
Classical problems such as feedback control or state estimation become particularly interesting and challenging in this context, leading to the task of designing boundary controllers and boundary observers.  Among the wide range of systems governed by PDEs, the class of hyperbolic systems has been largely considered in the control community. In this regard, sufficient conditions for controllability and
observability of first-order hyperbolic systems are discussed in \cite{li2010controllability}. The problem of stabilization of hyperbolic systems by means of boundary controls has been studied by many authors, see for instance \cite{coron2007strict} \cite{krstic2008backstepping} \cite{prieur2008robust} and the references therein. 
Observer design for linearized first-order hyperbolic systems based on Lyapunov methods has been addressed in \cite{aamo2006observer} and, for quasilinear first-order hyperbolic systems, similar results are proposed in~\cite{coron2013local} based on a backstepping approach.\\
In some cases, the evolution of a
process may have both a distributed and a finite-dimensional behavior, this resulting in a dynamics described  by partial differential equations coupled with ordinary differential equations. Typically, such interconnection takes place at the boundary of the space domain, with the output of the ODEs providing dynamic boundary conditions for the PDEs. In this context, 
boundary observer design for conservation laws with static and asymptotically stable dynamic boundary control is proposed in \cite{castillo2013boundary}, where sufficient conditions based on matrix inequalities are provided. A design procedure for backstepping observers is proposed in \cite{hasan2016boundary} based on the solution of an auxiliary set of PDEs for computing a suitable change of coordinates, while several solutions relying on Lyapunov methods have been established in \cite{trinh2017design,barreau2018lyapunov,  ferrante2019boundary}. 

\subsection{Contributions}
Adopting a setting similar to \cite{castillo2013boundary, ferrante2019boundary}, the problem of observer design in the presence of sector bounded nonlinearities~\cite{accikmecse2011observers} and unknown inputs is considered in this paper. Assuming that partial measurements are available at both boundary points of the domain, the goal is to design a full state, i.e. infinite-dimensional, nonlinear observer with the property of providing an estimation error completely decoupled from the unknown inputs. In the literature, estimators with such property are referred to as {\it Unknown Input Observers~(UIOs)} and have been recognized as an excellent tool for robust estimation and fault diagnosis \cite{chen1996design, imsland2007non, cristofaro2014fault}. In the infinite-dimensional context, the problem of linear UIO design has been previously investigated in \cite{demetriou2005unknown} from an abstract viewpoint. The related problem of active disturbance rejection is considered, for example, in \cite{feng2016new} and \cite{zhou2019performance}.

The construction proposed in this paper, which extends the preliminary work \cite{cristofaro2020unknown} by including nonlinear terms and allowing for space-dependent coefficients, relies on geometric conditions and combines the left and right boundary outputs in order to guarantee an exponentially stable error dynamics. Furthermore, a LMI-based procedure for the synthesis of observer gain matrices is given.

It is interesting to note that, unlike in the finite-dimensional case, a new feature arises in the design of Unknown Input Observers for coupled PDEs/ODEs: the finite-dimensional part of the observer is allowed to be not internally stable, whereas
a compensation is provided by the interconnection with the infinite-dimensional part which yields overall system stability.
In this paper we focus on the class of 1-D hyperbolic systems in order to deliver a constructive design procedure. However it is worth stressing that, adapting the same arguments, the constructive design of UIOs may be extended to other classes of systems governed by partial differential equations.

The remainder of the paper is organized as follows. Section~\ref{sec:SectionII} presents the problem we solve in the paper along with an outline of the proposed observer. Section~\ref{sec:Stab} provides sufficient conditions to ensure the convergence of the observer. Section~\ref{sec:Design} is devoted to the design of the observer. Finally, some numerical examples are illustrated in  Section~\ref{sec:NumExamples}. 
\subsection{Notation}
The sets $\R_{\geq 0}$ and $\R_{>0}$ represent the set of nonnegative and positive real scalars, respectively. The symbols $\Sp^n$ and $\Dp$ denote, respectively, the set of real $n\times n$ symmetric positive definite matrices and the set of diagonal positive definite matrices. For a matrix $A\in\R^{n\times m}$, $A^\top$ denotes the transpose of $A$ and when $n=m$, $\He(A)= A+A^\top$. The symbol $\otimes$ stands for the Kronecker product. For a square matrix $A\in\mathbb{R}^{n\times n}$, the spectrum is denoted by $\mathrm{spec}(A)$.
For a symmetric matrix $A$, positive definiteness (negative definiteness) and positive semidefiniteness (negative semidefiniteness) are denoted, respectively, by $A\succ 0$ ($A\prec 0$) and $A\succeq 0$ ($A\preceq 0$). Given $A, B\in\Spn$, we say that $A\preceq B$ ($A\succeq B$) if $A-B\preceq 0$  ($A-B\succeq 0$). Given $A\in\Spn$, $\lambda_{\max}(A)$ and $\lambda_{\min}(A)$ stand, respectively, for the largest and the smallest eigenvalue of $A$. 
In partitioned symmetric matrices, the symbol $\bullet$ stands for symmetric blocks. Given $x, y\in\R^n$, we denote by $\langle x, y \rangle_{\R^n}$ the standard Euclidean inner product. {Given a set $\mathbb{S}$, the notation $\mathrm{co}\mathbb{S}$ indicates its convex hull.} Let $\mathscr{U}\subset\R$ and $\mathscr{V}$ be a separable Hilbert space, we denote by $\mathcal{L}^0(\mathscr{U}; \mathscr{V})$ the set of equivalence classes of measurable functions $f\colon \mathscr{U}\rightarrow \mathscr{V}$ and, for any $p\in(0, \infty)$,  $\mathcal{L}^p(\mathscr{U}; \mathscr{V})$  stands for the set of $f\in\mathcal{L}^0(\mathscr{U}; \mathscr{V})$ such that $\Vert f\Vert_{\mathcal{L}^p}\coloneqq(\int_\mathscr{U} \vert f(x)\vert_{\mathscr{V}}
^p dx)^{\frac{1}{p}}$ is finite. Given $f, g\in\mathcal{L}^2(\mathscr{U}; \mathscr{V})$, $\langle f, g\rangle_{\mathcal{L}^2}\coloneqq \int_\mathscr{U} \langle f(x), g(x)\rangle_{\mathscr{V}} dx$. Let $\mathscr{I}$ be a bounded interval
$$
\scalebox{0.95}{$\begin{aligned}
\!\mathcal{H}^1(\mathscr{I};\mathscr{V})\!\coloneqq\!&\left\{\!f\!\in\!\mathcal{L}^2(\mathscr{I};\mathscr{V})\colon\!\!f\,\text{is absolutely continuous on}\,\,\mathscr{I}, \right.\\
&\left.\frac{d}{dz} f\in\mathcal{L}^2(\mathscr{I};\mathscr{V})\right\}
\end{aligned}$}
$$
where $\frac{d}{dz}$ stands for the weak derivative of $f$. Let $X$ and $Y$ be normed linear spaces, $f\colon U\rightarrow Y$, and $x\in U$, we denote by $Df(x)$ the Fr\'echet derivative of $f$ at $x$. 
The symbol $\mathcal{C}^k(U; V)$ denotes the set of $k$-times differentiable functions $f\colon U\rightarrow V$.
\section{Problem Statement and Outline of  Solution}
\label{sec:SectionII}

\subsection{Problem Setup}
Let $\Omega\coloneqq(0,1)$, we consider a system of $n_x$ semilinear 1-D conservation laws with dynamic boundary conditions  formally written as:
\begin{equation}
\label{eq:hyp_PDEs_3}
\def\arraystretch{1.2}
\begin{array}{lll}
&\displaystyle{\Xt(t,z)+\Lambda(z) \Xz(t, z)+Sf(Tx(t, z))=0}\\
&x(t, 0) = M\chi(t)\\
&\dot{\chi}(t)= A\chi(t)+Bu(t)+Ew(t)\\
&y_1(t)=Cx(t,0)\\
&y_2(t)=Nx(t, 1)\\
&\hspace{5cm}(t, z)\in\mathbb{R}_{\geq 0}\!\times\Omega
\end{array}
\end{equation}
where  $\Xt$ and $\Xz$ denote, respectively, the derivative of $x$ with respect to ``time'' and the ``spatial'' variable $z$, $(z\mapsto x(\cdot, z), \chi)\in\Ltwo\times\R^{n_\chi}$ is the system state, $u\in\R^{n_u}$ is a known boundary input, $w\in\R^{n_w}$ is an unknown boundary input and $y=[y_1\ y_2]\in\R^{n_y}$ is a measured output. We assume that
$\Lambda\in\mathcal{C}([0, 1];\Dpn)$, $M\in\R^{n_{x}\times n_\chi}, A\in\R^{n_\chi\times n_\chi}$, $B\in\R^{n_\chi\times n_u}$, $E\in\R^{n_\chi\times n_w}$, $C\in\R^{n_{y_1}\times n_{x}}$, $N\in\R^{n_{y_2}\times n_x}$, $S\in\R^{n_x\times n_t}$ and $T\in\R^{n_t\times n_x}$ are given and known.

Inspired by \cite{accikmecse2011observers}, henceforth, we assume that the nonlinearity $f$ satisfies the following \emph{incremental sector bound} condition:
\begin{equation}
\label{eq:incremental}
\begin{bmatrix}
y_1-y_2\\
f(y_1)-f(y_2)
\end{bmatrix}^\top
\mathfrak{M}\begin{bmatrix}
y_1-y_2\\
f(y_1)-f(y_2)
\end{bmatrix}\geq 0\quad \forall y_1, y_2\in\R^{n_t}
\end{equation}
where
$
\mathfrak{M}\coloneqq\begin{bmatrix}
-\He(U_1U_2)&U_1+U_2\\
\bullet&-\Id
\end{bmatrix}
$ and $U_2\succ U_1\succeq 0$ are given matrices. The result given next, whose proof is reported in the Appendix, shows that under the considered assumption, it turns out that $f$ is globally Lipschitz continuous. 
\begin{lemma}
\label{lemm:Lip}
Suppose that \eqref{eq:incremental} holds. Then, $f$ is globally Lipschitz continuous. \QEDB
\end{lemma}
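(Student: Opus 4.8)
The plan is to turn the matrix inequality \eqref{eq:incremental} into a pointwise Lipschitz estimate by completing the square. Fix $y_1,y_2\in\R^{n_t}$ and abbreviate $\delta\coloneqq y_1-y_2$ and $\phi\coloneqq f(y_1)-f(y_2)$. Since $U_1,U_2$ are symmetric, $(U_1+U_2)^\top=U_1+U_2$, and expanding the quadratic form associated to $\mathfrak{M}$ (using that scalar cross terms coincide with their transpose) gives
\[
-\delta^\top\He(U_1U_2)\delta+2\,\delta^\top(U_1+U_2)\phi-\phi^\top\phi\ge0,
\]
which I rearrange as $\phi^\top\phi-2\,\delta^\top(U_1+U_2)\phi\le-\delta^\top\He(U_1U_2)\delta$.

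Next I would complete the square on the left-hand side. Adding $\delta^\top(U_1+U_2)^2\delta$ to both sides and regrouping yields
\[
\bigl|\phi-(U_1+U_2)\delta\bigr|^2\le \delta^\top\bigl[(U_1+U_2)^2-\He(U_1U_2)\bigr]\delta .
\]
The crux is the algebraic identity $(U_1+U_2)^2-\He(U_1U_2)=U_1^2+U_2^2$, obtained by canceling the cross terms $U_1U_2+U_2U_1$ against $\He(U_1U_2)$. Hence the right-hand side equals $\delta^\top(U_1^2+U_2^2)\delta$, and since $U_1^2+U_2^2\succeq0$ is symmetric we get the clean bound $\bigl|\phi-(U_1+U_2)\delta\bigr|\le\sqrt{\lambda_{\max}(U_1^2+U_2^2)}\,|\delta|$.

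Finally I would invoke the triangle inequality, $|\phi|\le\bigl|\phi-(U_1+U_2)\delta\bigr|+\bigl|(U_1+U_2)\delta\bigr|$, together with $\bigl|(U_1+U_2)\delta\bigr|\le\lambda_{\max}(U_1+U_2)\,|\delta|$, to conclude
\[
|f(y_1)-f(y_2)|\le L\,|y_1-y_2|,\qquad L\coloneqq \lambda_{\max}(U_1+U_2)+\sqrt{\lambda_{\max}(U_1^2+U_2^2)},
\]
for all $y_1,y_2$, which is precisely global Lipschitz continuity with an explicit constant. I do not expect a genuine obstacle here: the argument is essentially bookkeeping, and the only point requiring care is the non-symmetry of the product $U_1U_2$, which is exactly why the Hermitian part $\He(U_1U_2)$ appears and why it collapses so neatly to $U_1^2+U_2^2$ after completing the square; the positivity hypotheses $U_2\succ U_1\succeq0$ are not even needed for this bound, only the structure of $\mathfrak{M}$.
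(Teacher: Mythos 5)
Your proof is correct, and it takes a genuinely different route from the paper's. The paper bounds the cross term $2\delta^\top(U_1+U_2)\phi$ via Young's inequality with a free parameter $\digamma$, then picks $\digamma$ large enough that the coefficient of $\phi^\top\phi$ becomes negative while that of $\delta^\top\delta$ stays positive; this is a soft argument whose Lipschitz constant is parameter-dependent and not explicit. You instead complete the square exactly in $\phi$, and the structure of $\mathfrak{M}$ rewards this with the identity $(U_1+U_2)^2-\He(U_1U_2)=U_1^2+U_2^2$, yielding the clean pointwise bound $\vert\phi-(U_1+U_2)\delta\vert\le\sqrt{\lambda_{\max}(U_1^2+U_2^2)}\,\vert\delta\vert$ and the explicit constant $L=\lambda_{\max}(U_1+U_2)+\sqrt{\lambda_{\max}(U_1^2+U_2^2)}$. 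All steps check out: symmetry of $U_1,U_2$ justifies both the collection of the cross terms and $(U_1+U_2)^\top(U_1+U_2)=(U_1+U_2)^2$. One small caveat on your closing remark: symmetry \emph{is} genuinely needed, and if you drop the positivity hypotheses you should replace $\lambda_{\max}(U_1+U_2)$ by the spectral norm $\Vert U_1+U_2\Vert$, since for an indefinite symmetric matrix $\lambda_{\max}$ need not bound $\vert(U_1+U_2)\delta\vert$; under the standing assumption $U_2\succ U_1\succeq 0$ the two coincide, so the lemma is unaffected. As a side benefit, your explicit constant is more informative than the paper's $\ell=g_2\vert g_1^{-1}\vert$, which in fact should read $\sqrt{g_2\vert g_1^{-1}\vert}$ since the paper's final inequality compares squared norms.
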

Existence and uniqueness of mild solutions to \eqref{eq:hyp_PDEs_3} follow from  \cite[Theorem 1.2]{pazy2012semigroups} and \cite[Theorem A.6]{bastin:coron:book:2016}. Indeed the cascade PDE-ODE system~\eqref{eq:hyp_PDEs_3} can be regarded as a Lipschitz perturbation of the class of linear systems addressed in \cite{bastin:coron:book:2016} and \cite{FerranteAUT19}. The goal of the paper is to design an observer providing an exponentially convergent estimate  $(z\mapsto \hat{x}(\cdot, z),\hat{\chi})$ of the state  $(z\mapsto x(\cdot, z),\chi)$, irrespectively of the unknown input $w$.
\subsection{Outline of the Proposed Observer}
We propose an unknown input observer of the following form (the dependence on the variables $t$ and $z$ is omitted when there is no ambiguity)
\begin{equation}
\begin{array}{ll}
\partial_t \hat{x}+\Lambda(z)\partial_z\hat{x}+Sf(T\hat{x})=0&\\
\hat{x}(t,0)=M\hat{\chi}(t)&\\
\!\!\!\begin{array}{l}
\dot{\psi}=F\psi+RBu+Ky_1+L(y_2-\hat{y}_2)
\end{array}&\\
\!\!\!\begin{array}{l}
\hat{\chi}=\psi+Hy_1
\end{array}&\\
\hat{y}_2(t)=N\hat{x}(t,1)&\\
&(t, z)\in\mathbb{R}_{\geq 0}\!\times\Omega
\end{array}
\label{eq:Observer}
\end{equation}
%
where matrices $F, R, K, H, L$ are to be designed. 
At this stage, define the following two estimation errors
$\epx\coloneqq x-\hat{x}$ and $\epchi\coloneqq \chi-\hat{\chi}$. Then, the dynamics of the interconnection of observer \eqref{eq:Observer} and plant \eqref{eq:hyp_PDEs_3} in coordinates $(x, \chi, \epx, \epchi)$ can be written as follows  
\begin{subequations}
\begin{equation}
\begin{array}{ll}
&\displaystyle{\Xt+\Lambda(z) \Xz+Sf(Tx)=0}\\
&x(t, 0) = M\chi(t)\\
&\dot{\chi}= A\chi+Bu+Ew\\
&\partial_t{\epsilon}_x+\Lambda(z)\partial_z\epx+S\rho(x,\epx)=0\\
&\epx(t,0)=M\epchi(t)\\[2mm]
&\dot{\epsilon}_\chi=-F\psi+G_u u+G_w w+G_\chi \chi-LN\epsilon_x(t,1)\\[2mm]
&\quad\qquad\qquad\qquad\qquad\qquad\qquad\qquad\qquad
(t, z)\in\mathbb{R}_{\geq 0}\!\times\Omega
\end{array}
\label{eq:ErrorCoupled}
\end{equation}
where:
\begin{equation}
\label{eq:deltaf}
\rho(x,\epx)\coloneqq f(Tx)-f(T(x-\epx))\qquad \forall x,\epx\in\R^{n_x}
\end{equation}
and
\begin{equation}
\label{eq:ErrorCoupledMatrices}
\begin{aligned}
&G_u\coloneqq (I-R-HCM)B\\
&G_w\coloneqq (I-HCM)E\\
&{G_\chi\coloneqq (A-HCMA-KCM)}
\end{aligned}
\end{equation}
\end{subequations}
Paralleling the literature of unknown input observers \cite{chen1996design, cristofaro2014fault}, in the result given next we propose a selection of the observer gains $R, F$, and $K$ enabling to decouple the error dynamics \eqref{eq:ErrorCoupled} from the input $u$  and the state $\chi$.
\begin{prop}
Let $K_1\in\R^{n_\chi\times n_{y_1}}$ and select:
\begin{subequations}
\label{eq:AlgebraicConstraint}
\begin{eqnarray}
&&R=(I-HCM)\label{eq:UIO_R}\\
&&K=K_1+K_2,
\quad K_2=FH\label{eq:UIO_K}\\
&&F=A-HCMA-K_1CM\label{eq:UIO_F}
\end{eqnarray}
\end{subequations}
Then, the dynamics of the error variables in \eqref{eq:ErrorCoupled} turn into:
\begin{equation}
\label{eq:ErrorUncoupled}
\begin{array}{ll}
&\partial_t{\epsilon}_x+\Lambda(z)\partial_z{\epsilon}_x+S\rho(x,\epx)=0\\ 
&\epsilon_x(t, 0)=M\epsilon_\chi(t)\\
&\dot\epsilon_\chi=F\epsilon_\chi+REw-LN\epsilon_x(t,1)\\
[2mm]
&\quad\qquad\qquad\qquad\qquad\qquad\qquad\qquad\qquad
(t, z)\in\mathbb{R}_{\geq 0}\!\times\Omega
\end{array}
\end{equation}
\end{prop}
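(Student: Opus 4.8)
The plan is to verify that substituting the algebraic constraints \eqref{eq:AlgebraicConstraint} into the coupled error dynamics \eqref{eq:ErrorCoupled} reproduces exactly \eqref{eq:ErrorUncoupled}. I observe first that the PDE governing $\epx$ and the boundary condition $\epx(t,0)=M\epchi$ in \eqref{eq:ErrorCoupled} do not involve any of the observer gains, so they carry over to \eqref{eq:ErrorUncoupled} unchanged; the entire argument therefore reduces to simplifying the finite-dimensional equation for $\dot\epchi$.

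For that ODE I would handle the three source terms $G_u u$, $G_w w$, $G_\chi\chi$ together with the remaining term $-F\psi$ separately. Substituting $R=I-HCM$ from \eqref{eq:UIO_R} into $G_u=(I-R-HCM)B$ immediately gives $G_u=0$, which eliminates the known input $u$; the same relation identifies $G_w=(I-HCM)E=RE$, so that $G_w w=REw$ is precisely the input term appearing in \eqref{eq:ErrorUncoupled}.

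The crux is to show that the pair $-F\psi+G_\chi\chi$ collapses to $F\epchi$. To this end I would eliminate $\psi$ in favour of the error variables: since $\hat{\chi}=\psi+Hy_1$ and $y_1=Cx(t,0)=CM\chi$, the definition $\epchi=\chi-\hat{\chi}$ yields $\psi=(I-HCM)\chi-\epchi$. Inserting this expression gives
\[
-F\psi+G_\chi\chi=F\epchi+\bigl[\,G_\chi-F(I-HCM)\,\bigr]\chi ,
\]
so it remains only to check that the bracketed matrix vanishes. Expanding $G_\chi=A-HCMA-KCM$ with $K=K_1+FH$ from \eqref{eq:UIO_K} produces $G_\chi=A-HCMA-K_1CM-FHCM$, and invoking $F=A-HCMA-K_1CM$ from \eqref{eq:UIO_F} this reduces to $G_\chi=F-FHCM=F(I-HCM)$; hence $G_\chi-F(I-HCM)=0$ identically and the $\chi$-dependent term drops out.

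I expect the only delicate point to be the bookkeeping in this last cancellation, where the term $FHCM$ generated by the choice $K_2=FH$ is exactly what is needed to absorb the contribution of $F(I-HCM)$; the structure of the constraints is engineered precisely so that the internal variable $\psi$ and the external state $\chi$ recombine into $\epchi$. Once this is matched, collecting the simplified source terms together with the surviving boundary coupling $-LN\epx(t,1)$ yields $\dot\epchi=F\epchi+REw-LN\epx(t,1)$, which completes the derivation of \eqref{eq:ErrorUncoupled}.
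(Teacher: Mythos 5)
Your proposal is correct and follows essentially the same route as the paper: both eliminate $\psi$ via the identity $\psi=\hat{\chi}-Hy_1=\hat{\chi}-HCM\chi$ and then verify that the gain selections force $G_u=0$, $G_w=RE$, and $G_\chi=F-FHCM=F(I-HCM)$, so that the $\chi$-dependent terms cancel and $-F\psi+G_\chi\chi$ collapses to $F\epchi$. Your write-up merely makes explicit the bookkeeping that the paper compresses into one line.
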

\begin{proof}
{Using the identity $\psi=\hat{\chi}-Hy_1=\hat{\chi}-HCM\chi$ in the last equation in \eqref{eq:ErrorCoupled}, the result can be easily proven by noticing that the selection of the matrices $R, K$, and $F$ in \eqref{eq:AlgebraicConstraint} implies that matrices $G_u$ and $G_\chi$ in \eqref{eq:ErrorCoupledMatrices} satisfy
$
G_u=0$ and $ G_\chi=F-K_2CM=F-FHCM.
$}
\end{proof}
As a second step, to decouple the error dynamics \eqref{eq:ErrorUncoupled} from the unknown input $w$, 
we select $R$, or actually $H$, such that $RE=(I-HCM)E=0$.
To ensure that a feasible solution for the above identity exists, we consider the following assumption:
\begin{assumption}\label{ass:rank}
The matrix $CME$ is full column rank.
\end{assumption}
Indeed, as long as the above assumption is in force, $CME$ is left invertible and one can pick: 
\begin{equation}\label{eq:acca}
H=E((CME)^\top CME)^{-1}(CME)^\top 
\end{equation}
This selection ensures that the matrix $G_w$ in \eqref{eq:ErrorCoupledMatrices} vanishes. Namely, the error dynamics are totally decoupled from the unknown input $w$. In particular, under \eqref{eq:AlgebraicConstraint} and \eqref{eq:acca} the error dynamics read:
\begin{equation}
\label{eq:Sys_decoup}
\begin{array}{ll}
&\partial_t{\epsilon}_x+\Lambda(z)\partial_z{\epsilon}_x+S\rho(x,\epx)=0\\ 
&\epsilon_x(t,0)=M\epsilon_\chi(t)\\
&\dot\epsilon_\chi=F\epsilon_\chi(t)-LN\epsilon_x(t,1)\\
&\qquad\qquad\qquad\qquad\qquad\qquad\qquad\qquad
(t, z)\in\mathbb{R}_{\geq 0}\!\times\Omega
\end{array}
\end{equation}
The well-posedness of error system \eqref{eq:Sys_decoup} follows from Proposition~\ref{prop:existence} in the Appendix, exploiting the same arguments used in the proof of \cite[Proposition 1]{ferrante2019boundary}.
\subsection{Abstract Formulation of the Error Dynamics}
To analyze interconnection the error dynamics \eqref{eq:Sys_decoup} we reformulate those as an abstract differential equation on the Hilbert space $\mathcal{X}\coloneqq \Ltwo\times\R^{n_\chi}$
endowed with its natural inner product, {namely for all $(x_1, \chi_1), (x_2, \chi_2)\in\mathcal{X}$ 
$$
\begin{aligned}
\langle (x_1, \chi_1), (x_2, \chi_2)\rangle_{\mathcal{X}}\coloneqq\langle x_1, x_2\rangle_{\mathcal{L}^2}+\langle \chi_1, \chi_2\rangle_{\R^{n_{\chi}}}
\end{aligned}
$$}
and consider the effect of the plant state $x$ on the error dynamics as an exogenous input. 
Consider the following linear operator $\mathscr{A}\colon\dom\mathscr{A}\rightarrow\mathcal{X}$
\begin{subequations}
\label{eq:opA}
\begin{equation}
\begin{aligned}
&\mathscr{A}\begin{pmatrix}
{\epsilon}_x\\
{\epsilon}_\chi
\end{pmatrix}\coloneqq\begin{pmatrix}
-\Lambda(z)\frac{d}{dz}&\!0\\
0&\!F
\end{pmatrix} \!\!\begin{pmatrix}
{\epsilon}_x\\
{\epsilon}_\chi
\end{pmatrix}\!+\!\begin{pmatrix}
0\\
-L N\epx(1) 
\end{pmatrix}
\end{aligned}
\label{eq:operator}
\end{equation}
with 
\begin{equation}
\begin{aligned}
\dom\mathscr{A}\coloneqq\left\{\right.&(\epx, \ep_\chi)\in\Hone\times\R^{n_\chi}\colon\\
\hspace{-2cm}&\epx(0)=M\epchi\left.\right\}
\end{aligned}
\end{equation}
\end{subequations}
and let $h\colon\Ltwo\times\Ltwo\rightarrow\mathcal{X}$ be defined as:  
\begin{equation}
(h(x,\epx))(z)\coloneqq\begin{pmatrix}
-S\rho(x(z), \epx(z))\\
0
\end{pmatrix}\qquad\forall z\in[0,1]
\label{eq:operator_B}
\end{equation}

The following result, whose proof is a direct consequence of Lemma~\ref{lemm:Lip} and \eqref{eq:incremental}, provides some useful properties for the function $h$. 
\begin{lemma}
The function $(x,\epx)\mapsto h(x,\epx)$ is globally Lipschitz continuous on $\mathcal{L}^2((0,1);\R^{n_x})$ uniformly with respect to $x$. Moreover, for all $x, \ep_x\in\mathcal{L}^2((0,1);\R^{n_x})$ and any $\pi\in\mathcal{L}^2((0,1);\R_{\geq 0})$ the following inequality holds:
\begin{equation}
\int_0^1 {\pi(z)}\begin{bmatrix}
(T\epx)(z)\\
\rho(x(z), \epx(z))
\end{bmatrix}^\top\mathfrak{M}\begin{bmatrix}
(T\epx)(z)\\
\rho(x(z), \epx(z))
\end{bmatrix}dz\geq 0
\label{eq:ConicIneq}
\end{equation}
\QEDB
\end{lemma}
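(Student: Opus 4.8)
The plan is to establish the two assertions separately, since both reduce to pointwise arguments in $z$ combined with the facts already at hand, namely Lemma~\ref{lemm:Lip} and the incremental sector condition \eqref{eq:incremental}.

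For the global Lipschitz continuity I would first invoke Lemma~\ref{lemm:Lip} to fix a finite global Lipschitz constant $\ell$ for $f$. The structural point is that in the difference $\rho(x,\epx_1)(z)-\rho(x,\epx_2)(z)$ the common term $f(Tx(z))$ cancels, leaving $f(T(x-\epx_2)(z))-f(T(x-\epx_1)(z))$; the two arguments of $f$ differ by $T(\epx_1-\epx_2)(z)$, a quantity that does not involve $x$. Applying the Lipschitz bound pointwise gives $\vert\rho(x,\epx_1)(z)-\rho(x,\epx_2)(z)\vert\le \ell\,\Vert T\Vert\,\vert\epx_1(z)-\epx_2(z)\vert$ for almost every $z$, and squaring and integrating over $(0,1)$ yields $\Vert \rho(x,\epx_1)-\rho(x,\epx_2)\Vert_{\mathcal{L}^2}\le \ell\,\Vert T\Vert\,\Vert\epx_1-\epx_2\Vert_{\mathcal{L}^2}$ with a constant independent of $x$. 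Since $h$ differs from $\rho$ only by the bounded linear map $-S$ in its first component and has a vanishing second component, I conclude $\Vert h(x,\epx_1)-h(x,\epx_2)\Vert_{\mathcal{X}}\le \ell\,\Vert S\Vert\,\Vert T\Vert\,\Vert\epx_1-\epx_2\Vert_{\mathcal{L}^2}$, which is precisely the uniform-in-$x$ Lipschitz estimate. Taking $\epx_2=0$ in the same bound also shows $h(x,\epx)\in\mathcal{X}$, so the map is well defined.

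For the integral inequality \eqref{eq:ConicIneq} I would fix $z\in(0,1)$ and specialize \eqref{eq:incremental} to the choice $y_1=Tx(z)$ and $y_2=T(x(z)-\epx(z))$. With this choice one has $y_1-y_2=(T\epx)(z)$ and $f(y_1)-f(y_2)=\rho(x(z),\epx(z))$, so \eqref{eq:incremental} states exactly that the quadratic form appearing under the integral sign is nonnegative for almost every $z$. Because $z\mapsto\rho(x(z),\epx(z))$ is measurable (as a composition of the continuous map $f$ with measurable functions), multiplying by the nonnegative weight $\pi(z)$ preserves the sign pointwise, and the Lebesgue integral of a nonnegative measurable integrand is always well defined in $[0,+\infty]$. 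In particular the integral is $\ge 0$, which is the claim.

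The computations here are elementary, and the only points demanding care are of a bookkeeping nature. The main subtlety is explaining why the Lipschitz estimate is uniform in $x$: this rests entirely on the cancellation of the common term $f(Tx(z))$ and on the fact that the residual difference of arguments, $T(\epx_1-\epx_2)(z)$, carries no $x$-dependence. A secondary technical point is that the weighted integrand in \eqref{eq:ConicIneq} need not be integrable a priori; however, since it is nonnegative almost everywhere, its integral exists in $[0,+\infty]$ and the inequality holds without any additional integrability hypothesis on $\pi$ or $\epx$.
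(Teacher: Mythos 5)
Your proof is correct and takes essentially the same route the paper intends: the paper offers no written argument, stating only that the lemma is ``a direct consequence of Lemma~\ref{lemm:Lip} and \eqref{eq:incremental}'', and your writeup---cancellation of the common term $f(Tx(z))$ so that the pointwise Lipschitz bound $\vert\rho(x,\epx_1)(z)-\rho(x,\epx_2)(z)\vert\le\ell\Vert T\Vert\,\vert\epx_1(z)-\epx_2(z)\vert$ is uniform in $x$, followed by the pointwise specialization $y_1=Tx(z)$, $y_2=T(x(z)-\epx(z))$ in \eqref{eq:incremental}, multiplication by $\pi(z)\ge 0$, and integration---is precisely that direct consequence made explicit. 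Your observation that the nonnegative measurable integrand makes the integral well defined in $[0,+\infty]$ without extra integrability hypotheses is a careful detail the paper leaves implicit.
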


{By interpreting the variable $x$ as an exogenous input given by the $x$-component of a solution to \eqref{eq:hyp_PDEs_3} with $x\in\mathcal{C}^0(\dom x, \mathcal{L}^2((0,1);\R^{n_x}))$, the error dynamics \eqref{eq:Observer} can be formally written as the following abstract differential equation on the Hilbert space $\mathcal{X}$}
\begin{equation}
\begin{pmatrix}
\dot{{\epsilon}}_x\\
\dot{{\epsilon}}_\chi
\end{pmatrix}=\mathscr{A}\begin{pmatrix}
{\epsilon}_x\\
{\epsilon}_\chi
\end{pmatrix}+h(x, \epx)
\label{eq:abstract}
\end{equation}
{Therefore, since from \cite[Section III.A]{ferrante2019boundary} $\Aop$ generates a $\mathcal{C}_0$-semigroup on $\mathcal{X}$, thanks to Lemma~\ref{lemm:Lip}, Proposition~\ref{prop:existence} entails uniqueness and global existence of maximal solutions pairs to~\eqref{eq:abstract}. \smallskip\\
Hinging on the discussion above, we aim at solving the considered state estimation problem by providing sufficient conditions for UGES of the origin of \eqref{eq:abstract}. This is the objective of the next section.}

\section{Stability analysis of the Error Dynamics}
\label{sec:Stab}
The following {\it detectability property} is instrumental to decide the approach for designing the observer gains. In particular, while stability of the observer is essentially granted under the validity of such property, in the opposite case the observer synthesis requires an additional effort.
{
\begin{prope}{[Detectability]}\label{prope:detect}
Let Assumption~\ref{ass:rank} hold, pick $H$ as defined in~(\ref{eq:acca}) and set $R=(I-HCM)$ accordingly. The pair $(RA,CM)$ is detectable.\hfill$\diamond$
\end{prope}}

The analysis of the stability of the error system depends on whether the Property \ref{prope:detect} holds or not. Therefore, we analyze the two cases separately.
\subsection{Stability with detectability}
\label{sec:StabDete}
The following result provides sufficient conditions 
for the design of the proposed observer when  
Property~\ref{prope:detect} holds.
\begin{theorem}
\label{th:StabDect}
The origin of  \eqref{eq:abstract} is UGES if there exist $\mu,\kappa\in\R_{>0}, P\in\Dpn, Q\in\mathbb{S}^{n_\chi}_{+}, K_1\in\R^{n_x\times n_{y_1}}$, and $L$ such that
\begin{align}
\label{eq:LMI_V1}
&\mathfrak{D}(z)\prec 0\quad\quad\forall z\in[0,1]\\
\label{eq:LMI_V2}
&\mathfrak{F}\coloneqq\left[
\begin{array}{cc}
-e^{-\mu} P & -N^\top L^\top Q \\
\bullet&\He(F^\top Q)+M^\top  PM\end{array}
\right]\prec 0
\end{align}
where for all $z\in [0,1]$, $\mathfrak{D}(z)$ is defined in \eqref{eq:MatrixD} (at the top of the next page) and $F=RA-K_1CM$.
\begin{figure*}
\begin{equation}
\label{eq:MatrixD}
\mathfrak{D}(z)\coloneqq\left[
\begin{array}{cc}
-\mu P-\kappa T^\top\mathrm{He}(U_1U_2)T &-\Lambda^{-1}(z)PS+\kappa T^\top( U_1+U_2)\\
\bullet&- \kappa I\end{array}
\right]
\end{equation}
\end{figure*}
\end{theorem}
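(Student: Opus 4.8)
The plan is to exhibit a weighted Lyapunov functional on $\mathcal{X}$ whose dissipation inequality collapses exactly onto the two matrix inequalities \eqref{eq:LMI_V1}--\eqref{eq:LMI_V2}. Guided by the factor $e^{-\mu}$ in $\mathfrak{F}$ and by the block $\Lambda^{-1}(z)PS$ in $\mathfrak{D}(z)$, I would take
\begin{equation*}
V(\epx,\epchi)\coloneqq\int_0^1 e^{-\mu z}\,\epx(z)^\top\Lambda^{-1}(z)P\,\epx(z)\,dz+\epchi^\top Q\,\epchi.
\end{equation*}
Since $P\in\Dpn$ and $\Lambda(z)$ are both diagonal, the weight $\Lambda^{-1}(z)P$ is symmetric positive definite; together with $Q\succ0$, continuity of $\Lambda$ on the compact set $[0,1]$, and $e^{-\mu z}\in[e^{-\mu},1]$, this yields constants $0<c_1\le c_2$ with $c_1\|(\epx,\epchi)\|_{\mathcal{X}}^2\le V\le c_2\|(\epx,\epchi)\|_{\mathcal{X}}^2$, so that $V$ is equivalent to the squared norm on $\mathcal{X}$. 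The diagonality of $P$ is not cosmetic: it is precisely what makes $P$ commute with $\Lambda(z)$ in the computation below.

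Next I would differentiate $V$ along solutions of \eqref{eq:Sys_decoup}. Substituting $\partial_t\epx=-\Lambda\partial_z\epx-S\rho$ and using $\Lambda^{-1}P\Lambda=P$, the distributed part becomes $-\int_0^1 e^{-\mu z}\partial_z(\epx^\top P\epx)\,dz-2\int_0^1 e^{-\mu z}\epx^\top\Lambda^{-1}PS\rho\,dz$. Integrating the first integral by parts produces the interior term $-\mu\int_0^1 e^{-\mu z}\epx^\top P\epx\,dz$ together with the boundary contributions $-e^{-\mu}\epx(1)^\top P\epx(1)+\epx(0)^\top P\epx(0)$; invoking the boundary condition $\epx(0)=M\epchi$ turns the latter into $\epchi^\top M^\top PM\epchi$. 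Inserting $\dot\epchi=F\epchi-LN\epx(1)$, the finite-dimensional part adds $\epchi^\top\He(F^\top Q)\epchi-2\epchi^\top QLN\epx(1)$. Collecting the boundary/finite-dimensional terms into the vector $\eta\coloneqq(\epx(1),\epchi)$ reproduces exactly the quadratic form with kernel $\mathfrak{F}$ of \eqref{eq:LMI_V2}.

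To absorb the nonlinearity I would add to $\dot V$ the nonnegative quantity furnished by the integrated sector inequality \eqref{eq:ConicIneq} evaluated at the admissible weight $\pi(z)=\kappa e^{-\mu z}\ge0$. Expanding the $\mathfrak{M}$-form and regrouping the $e^{-\mu z}$-weighted integrands, all distributed terms assemble into a single quadratic form in $\zeta(z)\coloneqq(\epx(z),\rho(x(z),\epx(z)))$ whose kernel is precisely $\mathfrak{D}(z)$ from \eqref{eq:MatrixD}, so that
\begin{equation*}
\dot V\le\int_0^1 e^{-\mu z}\,\zeta(z)^\top\mathfrak{D}(z)\,\zeta(z)\,dz+\eta^\top\mathfrak{F}\,\eta.
\end{equation*}
By \eqref{eq:LMI_V1} and continuity of $z\mapsto\mathfrak{D}(z)$ on $[0,1]$ there is $\alpha>0$ with $\mathfrak{D}(z)\preceq-\alpha I$ for every $z$, while \eqref{eq:LMI_V2} gives $\mathfrak{F}\preceq-\beta I$ for some $\beta>0$. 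Discarding the favorable $\epx(1)$ terms and the nonnegative $\rho$ contributions and comparing $\int_0^1 e^{-\mu z}|\epx|^2\,dz$ and $|\epchi|^2$ with the matching pieces of $V$, one arrives at $\dot V\le-c\,V$ for some $c>0$, whence $V(t)\le e^{-ct}V(0)$.

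The decisive difficulty is analytic rather than algebraic: the differentiation of $V$ and the integration by parts are rigorously licit only along strong solutions, i.e.\ for initial data in $\dom\mathscr{A}$, whereas \eqref{eq:abstract} is in general understood in the mild sense. I would therefore first establish $V(t)\le e^{-ct}V(0)$ on the dense domain $\dom\mathscr{A}$ and then propagate it to arbitrary initial data by a density and continuous-dependence argument, using that $\mathscr{A}$ generates a $\mathcal{C}_0$-semigroup on $\mathcal{X}$ (recalled before \eqref{eq:abstract}) and that $h$ is globally Lipschitz uniformly in $x$ (Lemma~\ref{lemm:Lip} and the lemma following it). Combining this with the norm equivalence yields $\|(\epx(t),\epchi(t))\|_{\mathcal{X}}\le\sqrt{c_2/c_1}\,e^{-ct/2}\|(\epx(0),\epchi(0))\|_{\mathcal{X}}$, i.e.\ UGES; crucially, every constant is independent of the exogenous plant trajectory $x$, precisely because \eqref{eq:ConicIneq} holds for all $x$. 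A minor point to verify is the uniformity of $\alpha$ over $z\in[0,1]$, which follows from compactness of $[0,1]$ and $\Lambda\in\mathcal{C}([0,1];\Dpn)$.
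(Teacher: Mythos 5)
Your proposal is correct and takes essentially the same route as the paper: the identical functional $V(\epx,\epchi)=\int_0^1 e^{-\mu z}\epx^\top\Lambda^{-1}(z)P\,\epx\,dz+\epchi^\top Q\epchi$, the same integration by parts with $\epx(0)=M\epchi$, the same sector multiplier $\kappa e^{-\mu z}$ via \eqref{eq:ConicIneq}, and the same reduction of $\dot V$ to the block-diagonal kernel $(\mathfrak{D}(z),\mathfrak{F})$. Your closing density/continuous-dependence step for mild solutions is exactly what the paper delegates to Theorem~\ref{thm:GES} in the appendix (strong solutions first, then a Gr\"onwall-based approximation argument), so nothing is missing.
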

\begin{proof}
{For any element of the state space $(\epx, \epchi)\in\mathcal{X}$}, let us introduce the candidate Lyapunov functional
\begin{equation}
\label{eq:FunctionalV}
V(\epsilon_x,\epsilon_\chi)\coloneqq\int_0^1e^{-\mu z}\epsilon_x^\top (z)\Lambda^{-1}(z)P\epsilon_x(z)dz+\epsilon_\chi^\top Q\epsilon_\chi
\end{equation}
In particular, observe that for all $(\epx, \epchi, x)\in\dom\mathscr{A}\times\mathcal{L}^2((0,1);\R^{n_x})$ one has
\begin{equation}
\label{eq:sandwhich}
\alpha_1\vert (\epx, \epchi)\vert^2
\leq V(\ep,\eta)\leq \alpha_2 \vert (\epx, \epchi)\vert^2
\end{equation}
where 
\begin{equation}
\begin{aligned}
&\alpha_1\coloneqq\min_{z\in[0,1]}\lambda_{\min}\left(
\begin{bmatrix}\Lambda^{-1}(z)P e^{-\mu}&0\\\bullet&Q\end{bmatrix}\right),\smallskip \\
&\alpha_2\coloneqq\max_{z\in[0,1]}\lambda_{\max}\left(\begin{bmatrix}\Lambda^{-1}(z)P&0\\\bullet&Q\end{bmatrix}\right)
\end{aligned}
\label{eq:alpha_12}
\end{equation}
are strictly positive.
Moreover, it can be easily shown that for all $(\epx, \epchi)\in\mathcal{X}$:
$$
\begin{aligned}
&(w_{x}, \hspace{-.05cm}w_{\chi})\mapsto DV(\epx, \epchi)\hspace{-.0cm}\begin{pmatrix}
w_{x}\\
w_{\chi}
\end{pmatrix}\\
&\hspace{1cm}=2\left(\int_0^1 \hspace{-.25cm} e^{-\mu z}\epx^\top (z)\Lambda^{-1}(z)Pw_{x}(z)dz+\epchi^\top Q w_{\chi}\right)
\end{aligned}
$$
Thus, {for any $(\epx, \epchi)\in\dom\mathscr{A}$, one has}
$$
\begin{aligned}
&\dot{V}(\epx, \epchi,x)\coloneqq DV(\epx, \epchi){\left(\mathscr{A}\begin{pmatrix}
{\epsilon}_x\\
{\epsilon}_\chi
\end{pmatrix}+h(x, \epx)\right)}=\\
&-2\int_0^1e^{-\mu z}\epsilon_x^\top (z)P \frac{d}{dz} \epsilon_x(z)dz\\
&\!\!\!\!\!-2\int_0^1e^{-\mu z}\epsilon_x^\top (z)\Lambda^{-1}(z)P S\rho(x(z),\epsilon_x(z))dz\\
&+\epsilon_\chi^\top \He(QF)\epsilon_\chi-2\epsilon_\chi^\top QLN\epsilon_x(1)
\end{aligned}
$$
{Integrating by parts the first term in the right-hand side and using the boundary conditions}, for $(\epx, \epchi)\in\dom\mathscr{A}$ one gets:
$$
\begin{aligned}
&\dot{V}(\epx, \epchi,x)\!=\!\int_0^1
\begin{bmatrix}
\epx(z)\\
\rho(x(z),\epx(z))\\
\epx(1)\\
\epchi
\end{bmatrix}^\top \!\!
\Psi(z)\!\begin{bmatrix}
\epx(z)\\
\rho(x(z),\epx(z))\\
\epx(1)\\
\epchi
\end{bmatrix}dz
\end{aligned}
$$
where for all $z\in[0, 1]$
$$
\!\Psi(z)\coloneqq\!\scalebox{0.75}{$\left[\!
\begin{array}{cccc}
-\mu e^{-\mu z} P &-e^{-\mu z}\Lambda^{-1}(z)PS&0&0\\
\bullet&0 &0 &0\\
\bullet&\bullet&-e^{-\mu} P & -N^\top L^\top Q\\
\bullet&\bullet&\bullet&\He(QF)+M^\top  PM
\end{array}\!
\right]$}
$$
From \eqref{eq:incremental}
$$
\begin{aligned}
\dot{V}(\epx, \epchi, x)=&\int_0^1
\begin{bmatrix}
\epx(z)\\
\rho(x(z),\epx(z))\\
\epx(1)\\
\epchi
\end{bmatrix}^\top 
\!\!\hat\Psi(z)\begin{bmatrix}
\epx(z)\\
\rho(x(z), \epx(z))\\
\epx(1)\\
\epchi
\end{bmatrix}dz\\
&\!\!\!\!\!\!\!\!\!\!\!\!\!-\kappa\int_0^1\!e^{-\mu z}\begin{bmatrix}
T\epx(z)\\
\rho(x(z), \epx(z))
\end{bmatrix}^\top\! 
\mathfrak{M}\begin{bmatrix}
T\epx(z)\\
\rho(x(z), \epx(z))
\end{bmatrix}dz
\end{aligned}
$$
with $$
\begin{array}{rl}
\hat\Psi(z)&\coloneqq\displaystyle\Psi(z)+{\kappa} e^{-\mu z}\begin{bmatrix}
\begin{bmatrix}T&0\\
0&I\end{bmatrix}^\top \mathfrak{M}\begin{bmatrix}T&0\\
0&I\end{bmatrix}&0\\
0&0
\end{bmatrix}\smallskip\\
&=e^{-\mu z}\mathrm{blkdiag}(\mathfrak{D}(z),\mathfrak{F})
\end{array}$$
where $\mathfrak{F}$ is defined in \eqref{eq:LMI_V2} and
the zero blocks have consistent dimensions.
The latter, by using \eqref{eq:LMI_V1}-\eqref{eq:LMI_V2} shows that for all $(\epx, \epchi,x)\in\dom\mathscr{A}\times\mathcal{L}^2((0,1);\R^{n_x})$
\begin{equation}
\label{eq:LyapBound}
\dot{V}(\epx, \epchi, x)\leq -\alpha_3 \vert (\epx, \epchi)\vert^2
\end{equation}
where $\alpha_3\coloneqq\displaystyle\min_{z\in[0,1]}\vert \lambda_{\max}\left(\hat\Psi(z)\right)\vert>0$. By the virtue of \eqref{eq:sandwhich} and \eqref{eq:LyapBound}, Theorem~\ref{thm:GES} ensures that the origin of \eqref{eq:abstract} is UGES. This ends the proof.
\end{proof}
\vspace{-0.4cm}
{\begin{rem}
Let us emphasize that the bottom-right block in \eqref{eq:LMI_V2} can be made negative-definite only if Property~\ref{prope:detect} is fulfilled.
\end{rem}}
\begin{rem}\label{rem:L=0}
It can be noticed that a somewhat trivial solution to the previous matrix inequality can always be found setting $L=0$. In other words, in the case of detectability of $(RA, CM)$, there is no need to use the right boundary output $y_2$ in the observer design. On the other hand, by tuning the gain $L$, the observer {transient} performance might be improved (see also \cite[Section 3]{castillo2013boundary}).
\end{rem}
\subsection{Stability without detectability}
When Property~\ref{prope:detect} does not hold, we can directly set~$K_1~=~0$, and consider the following more sophisticated Lyapunov functional inspired by the construction in \cite{ferrante2019boundary}:
\begin{equation}
W(\epsilon_x,\epsilon_\chi)\!\coloneqq\!\int_0^1\begin{bmatrix}\epsilon_x(z)\\ \epsilon_\chi \end{bmatrix}^\top \! \Pi(z)\left[
\begin{array}{c}
\epsilon_x(z)\\
\epsilon_\chi
\end{array}
\right]dz
\label{eq:LyapCross}
\end{equation}
with
$$
\Pi(z)\coloneqq\left[\!
\begin{array}{cc}
e^{-\mu z}\Lambda^{-1}(z)P&\Lambda^{-1}(z)Y^\top \\
\bullet&Q
\end{array}
\!\right]
$$
Accordingly, we can derive a sufficient  condition for exponential convergence.
\begin{theorem}
\label{thm:StabNonDet}
{Let Assumption\ref{ass:rank} hold, select $H$ as in \eqref{eq:acca} and set $F=RA$, $R=I-HCM$.} The origin of
 \eqref{eq:abstract} is UGES if there exist $\mu,\kappa\in\R_{>0}, P\in\Dpn,Y\in\R^{n_\chi\times n_x}$, $Q\in\Sp^{n_\chi}$, and $L\in\R^{n_\chi\times n_y}$ such that
\begin{align}
&\Pi(z)\succ 0\label{eq:LMI_P}&\forall z\in[0,1 ]
\smallskip\\
&\Phi(z)\coloneqq\begin{bmatrix}
\Upsilon(z)&\Gamma(z)\\
\bullet&-\kappa I
\end{bmatrix}\prec0\label{eq:LMI_Mz}&\forall z\in[0,1 ]
\end{align}
where for any $z\in[0,1]$, $\Upsilon(z)$ and $\Gamma(z)$ are defined as in
\eqref{eq:Mz} (at the top of the next page). 
\end{theorem}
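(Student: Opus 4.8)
The plan is to replicate the Lyapunov argument of Theorem~\ref{th:StabDect}, now with the cross-term functional $W$ of \eqref{eq:LyapCross}, which compensates for the absence of detectability by coupling the finite- and infinite-dimensional parts directly through the off-diagonal weight $\Lambda^{-1}(z)Y^\top$. First I would establish the sandwich bound. Since $\Lambda\in\mathcal{C}([0,1];\Dpn)$, the map $z\mapsto\Pi(z)$ is continuous on the compact interval $[0,1]$, so \eqref{eq:LMI_P} yields uniform bounds $0<\alpha_1\le\lambda_{\min}(\Pi(z))$ and $\lambda_{\max}(\Pi(z))\le\alpha_2$. Integrating the resulting pointwise estimates over $(0,1)$ gives $\alpha_1\vert(\epx,\epchi)\vert^2\le W(\epx,\epchi)\le\alpha_2\vert(\epx,\epchi)\vert^2$, the exact analogue of \eqref{eq:sandwhich}.

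Next I would compute $\dot W$ along \eqref{eq:abstract} for $(\epx,\epchi)\in\dom\mathscr{A}$. As in Theorem~\ref{th:StabDect}, $DW(\epx,\epchi)$ acts as $2\int_0^1[\epx(z)^\top\ \epchi^\top]\,\Pi(z)\,[w_x(z)^\top\ w_\chi^\top]^\top dz$, into which I substitute $w_x=-\Lambda(z)\frac{d}{dz}\epx-S\rho(x,\epx)$ and $w_\chi=F\epchi-LN\epx(1)$ from \eqref{eq:operator}--\eqref{eq:operator_B}. The factors $\Lambda^{-1}(z)$ in both block-rows of $\Pi(z)$ cancel the $\Lambda(z)$ in $w_x$, so the spatial-derivative contributions reduce to $-2\int_0^1 e^{-\mu z}\epx^\top P\frac{d}{dz}\epx\,dz$ and the new cross contribution $-2\int_0^1\epchi^\top Y\frac{d}{dz}\epx\,dz$. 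The first is treated exactly as in Theorem~\ref{th:StabDect}: integration by parts produces the damping term $-\mu\int_0^1 e^{-\mu z}\epx^\top P\epx\,dz$ together with the boundary terms $-e^{-\mu}\epx(1)^\top P\epx(1)$ and, via $\epx(0)=M\epchi$, the term $\epchi^\top M^\top PM\epchi$. Because $\epchi$ is $z$-independent, the second integrates directly to $-2\epchi^\top Y\epx(1)+\epchi^\top\He(YM)\epchi$, again using $\epx(0)=M\epchi$. The remaining off-diagonal contributions of $\Pi(z)$ feed the additional couplings $\Lambda^{-1}(z)Y^\top F$ (to $\epchi$), $-\Lambda^{-1}(z)Y^\top LN$ (to $\epx(1)$) and $-Y\Lambda^{-1}(z)S$ (to $\rho$).

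Collecting everything, $\dot W=\int_0^1\zeta(z)^\top\Theta(z)\zeta(z)\,dz$ with $\zeta(z):=[\epx(z)^\top\ \epx(1)^\top\ \epchi^\top\ \rho(x(z),\epx(z))^\top]^\top$ and a symmetric $\Theta(z)$. I would then apply the S-procedure as in the proof of Theorem~\ref{th:StabDect}: since the sector integral in \eqref{eq:ConicIneq} is nonnegative, adding $\kappa$ times the density $e^{-\mu z}\begin{bmatrix}T&0\\0&I\end{bmatrix}^\top\mathfrak{M}\begin{bmatrix}T&0\\0&I\end{bmatrix}$ on the $(\epx(z),\rho)$ entries only sharpens the bound, leaving $\dot W\le\int_0^1\zeta(z)^\top\hat\Theta(z)\zeta(z)\,dz$. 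Isolating $\rho$ as the last block (whose self-term is the $-\kappa I$ obtained after absorbing the weight $e^{-\mu z}$), $\hat\Theta(z)$ is brought to the form $\Phi(z)$ of \eqref{eq:LMI_Mz}, in which $\Upsilon(z)$ collects the quadratic form in $(\epx(z),\epx(1),\epchi)$---including the blocks $-\mu e^{-\mu z}P$, $-e^{-\mu}P$, $\He(QF)+M^\top PM+\He(YM)$ and the coupling $-(N^\top L^\top Q+Y)$---while $\Gamma(z)$ gathers the coupling to $\rho$. Hence \eqref{eq:LMI_Mz} forces $\hat\Theta(z)\prec0$ for all $z$, so $\dot W\le-\alpha_3\vert(\epx,\epchi)\vert^2$ with $\alpha_3:=\min_{z\in[0,1]}\vert\lambda_{\max}(\hat\Theta(z))\vert>0$; combined with the sandwich bound, Theorem~\ref{thm:GES} then yields UGES of the origin of \eqref{eq:abstract}, exactly as before.

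The crux---and the reason a single large LMI \eqref{eq:LMI_Mz} replaces the block-diagonal pair \eqref{eq:LMI_V1}--\eqref{eq:LMI_V2} of Theorem~\ref{th:StabDect}---is the cross weight $Y$: it couples $\epchi$ simultaneously to $\epx(z)$, to $\epx(1)$ and to $\rho$, so $\Theta(z)$ no longer splits as $\mathrm{blkdiag}(\mathfrak{D},\mathfrak{F})$ and the two inequalities must merge into one. The delicate bookkeeping is therefore twofold: correctly assigning each mixed term (the $YM$, $Y\Lambda^{-1}(z)S$, $\Lambda^{-1}(z)Y^\top F$ and $\Lambda^{-1}(z)Y^\top LN$ contributions, alongside the $-LN$, $F=RA$ and $M^\top PM$ terms inherited from the detectable case) to $\Upsilon(z)$ or $\Gamma(z)$; and reconciling the mismatched exponential weights, since the $P$-weighted blocks carry $e^{-\mu z}$ whereas the $Y$-weighted cross terms do not, so that the condition still closes as the clean $z$-parametrized inequality $\Phi(z)\prec0$. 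Everything else is a routine transcription of the detectable case.
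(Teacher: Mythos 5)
Your proposal reproduces the paper's own proof in structure and in almost all details: the same functional $W$ from \eqref{eq:LyapCross}; the sandwich bound with $\beta_1=\min_{z\in[0,1]}\lambda_{\min}(\Pi(z))$ and $\beta_2=\max_{z\in[0,1]}\lambda_{\max}(\Pi(z))$, strictly positive by \eqref{eq:LMI_P} together with continuity of $z\mapsto\Pi(z)$ on the compact interval; the computation of $\dot W$ by substituting the operator dynamics from \eqref{eq:operator}--\eqref{eq:operator_B}, integrating by parts and using $\epx(0)=M\epchi$ (a computation the paper delegates to the cited boundary-observer reference but which you carry out explicitly, with the correct assignment of every $Y$-coupling to $\Upsilon(z)$ or $\Gamma(z)$); the S-procedure through \eqref{eq:ConicIneq}; the uniform bound $\Phi(z)\preceq-\beta_3 I$ by compactness; and the conclusion via Theorem~\ref{thm:GES}.

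One step is wrong as written: you add the sector form with the $z$-dependent density $\kappa e^{-\mu z}$, as in Theorem~\ref{th:StabDect}, and then assert that the $-\kappa I$ block of $\Phi(z)$ is ``obtained after absorbing the weight $e^{-\mu z}$.'' No such absorption is available here: precisely because, as you note yourself, $\Phi(z)$ does not factor as $e^{-\mu z}$ times a $z$-independent matrix (the $Y$- and $Q$-blocks carry no exponential), a weighted sector density would leave the entries $-\kappa e^{-\mu z}T^\top\He(U_1U_2)T$, $\kappa e^{-\mu z}T^\top(U_1+U_2)$ and $-\kappa e^{-\mu z}I$ in the final matrix, which is not \eqref{eq:Mz}, and the hypothesis \eqref{eq:LMI_Mz} does not by itself imply negative definiteness of that weighted variant. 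The repair is one line: \eqref{eq:ConicIneq} holds for an arbitrary nonnegative density $\pi$, so take the constant density $\pi\equiv\kappa$; this is exactly what the paper does (note that the sector integral in \eqref{eq:Vdot2} carries no exponential weight) and it produces the unweighted $\kappa$-entries of \eqref{eq:Mz} directly. A final bookkeeping remark, in your favor: your $\epx(1)$ self-block $-e^{-\mu}P$ is what integration by parts actually yields, since the boundary term is constant in $z$, whereas \eqref{eq:Mz} prints $-e^{-\mu z}P$ in that position; this discrepancy sits in the paper's transcription rather than in your derivation, and your argument goes through verbatim with the block you computed.
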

\begin{proof} 
\begin{figure*}
\begin{equation}
\label{eq:Mz}
\begin{aligned}
&\Upsilon(z)\coloneqq\begin{bmatrix} 
-\mu e^{-\mu z} P -\kappa T^\top\mathrm{He}(U_1U_2)T& -\Lambda^{-1}(z)Y^\top  L N & \Lambda^{-1}(z)Y^\top  F\\
\bullet&-e^{-\mu z} P &-Y^\top -N^\top  L^\top Q\\
\bullet&\bullet& \He(QF+Y M)+M^\top  PM\\
\end{bmatrix}\\
&\Gamma(z)\coloneqq \begin{bmatrix} 
\kappa T^\top (U_1+ U_2)-e^{-\mu z}\Lambda^{-1}(z)PS\\
0\\
-Y\Lambda^{-1}(z) S\\
\end{bmatrix}
\end{aligned}
\end{equation}
\end{figure*}
\begin{figure*}
\begin{equation}
\label{eq:MatrixQ}
\mathfrak{Q}(D)\coloneqq\left[
\begin{array}{cc}
-\mu P-\kappa T^\top\mathrm{He}(U_1U_2)T &-D PS+\kappa T^\top( U_1+U_2)\\
\bullet&- \kappa I\end{array}
\right]
\end{equation}
\end{figure*}
The proof follows the same lines as the proof of Theorem~\ref{th:StabDect}. In particular, let, for all $(\epx, \epchi)\in\mathcal{X}$, $W$ be defined as in \eqref{eq:LyapCross}. Then, for all $(\epx, \epchi)\in\mathcal{X}$ one has:
\begin{equation}
\label{eq:sandwhich_W}
\beta_1\vert (\epx, \epchi)\vert^2_\mathcal{X}
\leq W(\epx,\epchi)\leq \beta_2 \vert (\epx, \epchi)\vert^2_\mathcal{X}
\end{equation}
where 
\begin{equation}
\beta_1\coloneqq\min_{z\in[0,1]}\lambda_{\min}\left(\Pi(z)\right),\quad
\beta_2\coloneqq\max_{z\in[0,1]}\lambda_{\max}\left(\Pi(z)\right)
\label{eq:alpha_12_W}
\end{equation}
are strictly positive due to \eqref{eq:LMI_P}.
Similarly as in \cite{ferrante2019boundary} and exploiting \eqref{eq:ConicIneq}, for all $(\epx, \epchi,x)\in\dom\mathscr{A}\times\mathcal{L}^2((0,1);\R^{n_x})$ we have
\begin{equation}
\begin{aligned}
\dot{W}(\epx, \epchi,x)&
\coloneqq {DW(\epx, \epchi)\left(\mathscr{A}\begin{pmatrix}
{\epsilon}_x\\
{\epsilon}_\chi
\end{pmatrix}+h(x, \epx)\right)}=\\
&\int_0^1
\varpi(z)^\top 
\Phi(z)\varpi(z) dz\\
&-\kappa\int_0^1\begin{bmatrix}
T\epx(z)\\
\rho(x(z), \epx(z))
\end{bmatrix}^\top
\mathfrak{M}\begin{bmatrix}
T\epx(z)\\
\rho(x(z), \epx(z))
\end{bmatrix}dz
\end{aligned}
\label{eq:Vdot2}
\end{equation}
where $\varpi(z)\coloneqq(
\epx(z),
\epx(1),
\epchi,
\rho(x(z), \epx(z)))$. At this stage notice that from \eqref{eq:LMI_Mz},  there exists $\beta_3>0$ such that: 
\begin{equation}
\label{eq:BoundPhi}
\Phi(z)\preceq-\beta_3 I\qquad \forall z\in[0, 1]
\end{equation} and
in particular $\beta_3\coloneqq\min_{z\in[0,1]}\vert \lambda_{\max}\left(\hat\Phi(z)\right)\vert$. Combining \eqref{eq:Vdot2} and \eqref{eq:BoundPhi} gives for all $(\epx, \epchi,x)\in\dom\mathscr{A}\times\mathcal{L}^2((0,1);\R^{n_x})$:
\begin{equation}
\dot{W}(\epx, \epchi,x)\leq -\beta_3 \vert (\epx, \epchi)\vert^2
\label{eq:LyapDotTwo}
\end{equation}
Thanks to \eqref{eq:sandwhich_W} and \eqref{eq:LyapDotTwo}, Theorem~\ref{thm:GES} ensures that the origin of \eqref{eq:abstract} is UGES. This concludes the proof.
\end{proof}
\section{Observer Design}
\label{sec:Design}
In this section, we show how the proposed observer can be designed via the solution to some linear matrix inequalities coupled to a line search on two scalar variables. 
\subsection{Observer Design with Detectability}
We assume that Property~\ref{prope:detect} holds. In this scenario, the design of the observer can be performed by relying on Theorem~\ref{th:StabDect}. On the other hand, the conditions in Theorem~\ref{th:StabDect} have two drawbacks that render their use tricky. First, they are nonlinear in the decision variables. Second, they need to be satisfied {over an uncountable set, {\it i.e.}, the interval $[0,1]$.} To overcome these drawbacks, next we propose a set of computationally affordable sufficient conditions for the design of the observer. 
\begin{proposition}
\label{prop:designDete} 
Assume that Property~\ref{prope:detect} is fulfilled. For $i=1, 2,\dots, n_x$, and all $z\in [0,1]$, let $d_i(z)$ be the $(i,i)$ entry of the matrix $\Lambda^{-1}(z)$. Define for all $i=1,2,\dots, n_x$, $\displaystyle\overline{d}_i\coloneqq\min_{z\in[0, 1]}d_i(z)$ and $\displaystyle\overline{d}_i\coloneqq\max_{z\in[0, 1]}d_i(z)$ and let 
$$
\mathbb{G}\coloneqq\left\{D\in\Dpn\colon D=\sum_{i=1}^{n_x}r_i(\mathbf{e}_i\otimes\mathbf{e}_i), r_i\in\{\underline{d}_i, \overline{d}_i\}\right\}
$$
where $\{\mathbf{e}_i\}_{i=1,..,n_x}$ stands for the canonical basis. 
The origin of \eqref{eq:abstract} is UGES if there exist $\mu,\kappa\in\R_{>0}, P\in\Dpn, Q\in\mathsf{S}^{n_\chi}_{+}, X\in\R^{n_\chi\times n_{y_1}}$ and $J\in\R^{n_\chi\times n_{y_2}}$ such that:
\begin{align}
\label{eq:LMI_V1_design}
&\mathfrak{Q}(D)\prec 0,\qquad\forall D\in\mathbb{G}\\
\label{eq:LMI_V2_design}
&\left[
\begin{array}{cc}
-e^{-\mu} P & -N^\top J^\top\\
\bullet&\He(QRA-XCM)+M^\top  PM\end{array}
\right]\prec 0
\end{align}
where $\mathfrak{Q}(D)$ is defined in \eqref{eq:MatrixQ} (at the top of the page). In particular, the observer gains can be selected as $L=Q^{-1}J, K_1=Q^{-1}X$.
\end{proposition}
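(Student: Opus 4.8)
The plan is to derive the result directly from Theorem~\ref{th:StabDect} by two essentially independent moves: first, replacing the semi-infinite family of constraints \eqref{eq:LMI_V1}, one per $z\in[0,1]$, by the finitely many vertex conditions \eqref{eq:LMI_V1_design}; and second, turning the bilinear inequality \eqref{eq:LMI_V2} into the genuine LMI \eqref{eq:LMI_V2_design} through a linearizing change of variables. The starting observation is that, comparing \eqref{eq:MatrixD} with \eqref{eq:MatrixQ}, the matrix $\mathfrak{D}(z)$ is nothing but $\mathfrak{Q}$ evaluated at $D=\Lambda^{-1}(z)$, that is $\mathfrak{D}(z)=\mathfrak{Q}(\Lambda^{-1}(z))$ for every $z\in[0,1]$; hence it suffices to show that $\mathfrak{Q}(\Lambda^{-1}(z))\prec0$ on $[0,1]$.

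First I would carry out the convexity argument. Since $\Lambda\in\mathcal{C}([0,1];\Dpn)$, its inverse $\Lambda^{-1}(z)$ is, for each $z$, a diagonal positive definite matrix whose $(i,i)$ entry $d_i(z)$ is continuous on the compact set $[0,1]$; thus $\underline{d}_i$ and $\overline{d}_i$ are attained and strictly positive, and $d_i(z)\in[\underline{d}_i,\overline{d}_i]$ for all $z$. Because a hyperrectangle is the convex hull of its corner points, the set $\{\Lambda^{-1}(z):z\in[0,1]\}$ is contained in $\Co\,\mathbb{G}$, where $\mathbb{G}$ collects the $2^{n_x}$ vertex matrices. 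The key structural fact is that $D\mapsto\mathfrak{Q}(D)$ is affine: $D$ enters \eqref{eq:MatrixQ} only through the block $-DPS$, everything else being constant in $D$. Consequently, writing $\Lambda^{-1}(z)=\sum_k\lambda_k D_k$ as a convex combination of vertices $D_k\in\mathbb{G}$ with $\lambda_k\geq0$ and $\sum_k\lambda_k=1$, and using $\sum_k\lambda_k=1$ to absorb the constant part, one obtains $\mathfrak{Q}(\Lambda^{-1}(z))=\sum_k\lambda_k\mathfrak{Q}(D_k)$. Since \eqref{eq:LMI_V1_design} forces $\mathfrak{Q}(D_k)\prec0$ at every vertex, this convex combination is negative definite, whence $\mathfrak{D}(z)=\mathfrak{Q}(\Lambda^{-1}(z))\prec0$ for all $z\in[0,1]$, which is exactly \eqref{eq:LMI_V1}.

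Next I would handle the change of variables. With $F=RA-K_1CM$, the substitution $X\coloneqq QK_1$ and $J\coloneqq QL$ gives $QF=QRA-XCM$, hence $\He(F^\top Q)=\He(QF)=\He(QRA-XCM)$, and $N^\top L^\top Q=N^\top J^\top$ since $Q$ is symmetric. Therefore \eqref{eq:LMI_V2_design} is precisely \eqref{eq:LMI_V2} rewritten in the variables $X,J$. Because $Q\succ0$ is invertible, this correspondence is bijective: from any feasible $(X,J)$ one recovers admissible gains $K_1=Q^{-1}X$ and $L=Q^{-1}J$ for which \eqref{eq:LMI_V2} holds. With \eqref{eq:LMI_V1} established in the previous step, and the remaining decision variables $\mu,\kappa\in\R_{>0}$, $P\in\Dpn$, $Q\in\mathbb{S}^{n_\chi}_{+}$ unchanged, all hypotheses of Theorem~\ref{th:StabDect} are met, and UGES of the origin of \eqref{eq:abstract} follows.

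I expect the only genuinely delicate point to be the vertex step. One must verify carefully that $\Lambda^{-1}(z)$ is diagonal, so that its range lies in the box $\prod_i[\underline{d}_i,\overline{d}_i]$, a property that would fail for a non-diagonal $\Lambda$, and that $\mathfrak{Q}$ depends \emph{affinely}, not merely continuously, on $D$, since it is affineness together with $\sum_k\lambda_k=1$ that lets the constant term survive the convex combination. The change of variables, by contrast, is a routine exact manipulation owing to the invertibility of $Q$.
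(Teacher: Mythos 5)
Your proposal is correct and follows essentially the same route as the paper's own proof: the linearizing substitution $X=QK_1$, $J=QL$ (with $Q\succ 0$ invertible) turns \eqref{eq:LMI_V2_design} into \eqref{eq:LMI_V2}, while the observation that $\mathfrak{D}(z)=\mathfrak{Q}(\Lambda^{-1}(z))$, combined with $\range(z\mapsto\Lambda^{-1}(z))\subset\Co\,\mathbb{G}$ and the affinity of $D\mapsto\mathfrak{Q}(D)$, reduces \eqref{eq:LMI_V1} to the vertex conditions \eqref{eq:LMI_V1_design}, exactly as in the paper. Your explicit convex-combination computation $\mathfrak{Q}(\Lambda^{-1}(z))=\sum_k\lambda_k\mathfrak{Q}(D_k)$ is just a spelled-out version of the paper's one-line statement $\mathfrak{Q}(\Co\,\mathbb{G})=\Co\,\mathfrak{Q}(\mathbb{G})$, and you correctly identified (and silently fixed) the paper's typo $\mathfrak{Q}(\Lambda(z))$ in place of $\mathfrak{Q}(\Lambda^{-1}(z))$.
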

\begin{proof}
As first step, notice that by replacing the expressions of $L$ and $K_1$ into \eqref{eq:LMI_V2_design} one gets \eqref{eq:LMI_V2}. To show that \eqref{eq:LMI_V1_design} implies \eqref{eq:LMI_V1},
first observe that by continuity of the functions $z\mapsto d_i(z)$, it turns out that 
$$
\begin{aligned}
&\range(z\mapsto \Lambda^{-1}(z))\subset\\
&\qquad\{D\in\Dpn\colon 
D=\sum_{i=1}^{n_x}r_i(\mathbf{e}_i\otimes\mathbf{e}_i), r_i\in[\underline{d}_i, \overline{d}_i]\}=\Co\mathbb{G}
\end{aligned}
$$
where the last equality is straightforward. Now, observe that since $D\mapsto\mathfrak{Q}(D)$ is affine, $\mathfrak{Q}(\Co\mathbb{G})=\Co\mathfrak{Q}(\mathbb{G})$. 
Hence, \eqref{eq:LMI_V1_design} implies that, for all $z\in [0, 1]$, $\mathfrak{Q}(\Lambda(z))\prec 0$. The latter, by simply  observing that for all $z\in [0, 1]$ $\mathfrak{Q}(\Lambda(z))=\mathfrak{D}(z)$, shows that  \eqref{eq:LMI_V1_design} implies \eqref{eq:LMI_V1}.
\end{proof}
\subsection{Observer Design without Detectability}
Following similar arguments as in the previous subsection, the following result can be established to deal with the scenario outlined in Section~\ref{sec:StabDete}. 
\begin{figure*}
\begin{equation}
\label{eq:MatrixXi}
\begin{aligned}
&\Xi(\omega, D)\coloneqq\begin{bmatrix} 
-\mu \omega P -\kappa T^\top\mathrm{He}(U_1U_2)T& 0 & DY^\top  F&\kappa T^\top (U_1+ U_2)-\omega DPS\\
\bullet&-\omega P &-Y^\top -N^\top  J^\top&0 \\
\bullet&\bullet& \He(QF+Y M)+M^\top  PM&-YDS\\
\bullet&\bullet&\bullet&-\kappa I
\end{bmatrix}
\end{aligned}
\end{equation}
\end{figure*}
\begin{proposition}
\label{prop:design_non_detect}
For $i=1, 2,\dots, n_x$, and all $z\in [0,1]$, let $s_i(z)$ be the $(i,i)$ entry of the matrix $e^{\mu z}\Lambda^{-1}(z)$. Define for all $i=1,2,\dots, n_x$, $\displaystyle\overline{s}_i\coloneqq\min_{z\in[0, 1]}s_i(z)$ and $\displaystyle\overline{s}_i\coloneqq\max_{z\in[0, 1]}s_i(z)$ and let 
$$
\widehat{\mathbb{G}}\coloneqq\left\{\widehat{D}\in\Dpn\colon \widehat{D}=\sum_{i=1}^{n_x}r_i(\mathbf{e}_i\otimes\mathbf{e}_i), r_i\in\{\underline{s}_i, \overline{s}_i\}\right\}
$$
The origin of \eqref{eq:abstract} is UGES if there exist $\mu,\kappa,\theta\in\R_{>0}, P\in\Dpn, Q\in\mathbb{S}^{n_\chi}_{+}$ and $J\in\R^{n_\chi\times n_{y_2}}$ such that:
\begin{align}
\label{eq:LMI2_V2_design_PiPos}
&\begin{bmatrix}
\widehat{D} P&\widehat{D} Y^\top\\
\bullet&Q
\end{bmatrix}\succ 0&\forall \widehat{D}\in\widehat{\mathbb{G}}\\
\label{eq:LMI2_V2_design}
&\left[
\begin{array}{ccc}
\Xi(1, D)&\Theta(D)&\theta Z^\top J^\top\\
\bullet&-\theta Q&0\\
\bullet&\bullet&-\theta Q
\end{array}
\right]\prec 0&\forall D\in\mathbb{G}\\
\label{eq:LMI2_V2_design2}
&\left[
\begin{array}{ccc}
\Xi(e^{-\mu}, D)&\Theta(D)&\theta Z^\top J^\top\\
\bullet&-\theta Q&0\\
\bullet&\bullet&-\theta Q
\end{array}
\right]\prec 0& \forall D\in\mathbb{G}
\end{align}
where $\Xi(\omega, D)$ is defined in \eqref{eq:MatrixXi} (at the top of the next page) and 
$$
\Theta(D)\coloneqq\begin{bmatrix}
-Y D&0&0&0
\end{bmatrix}^\top,\quad Z\coloneqq\begin{bmatrix}
0&N^\top&0&0
\end{bmatrix}.$$
In particular, the observer gain can be selected as $L=Q^{-1}J$.
\end{proposition}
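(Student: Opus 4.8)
The plan is to show that the finitely many vertex LMIs \eqref{eq:LMI2_V2_design_PiPos}--\eqref{eq:LMI2_V2_design2} imply the two pointwise conditions \eqref{eq:LMI_P} and \eqref{eq:LMI_Mz} of Theorem~\ref{thm:StabNonDet}, whence UGES follows directly from that theorem. One inherits from the non-detectable setting the choices $K_1=0$, $R=I-HCM$, $F=RA$, and $H$ as in \eqref{eq:acca}, so that the only gain to be reconstructed is $L$, recovered through the linearizing substitution $J=QL$ (equivalently $L=Q^{-1}J$). After this substitution every block of $\Phi(z)$ in \eqref{eq:LMI_Mz} is affine in the decision variables $P,Q,Y,J$ except for the single off-diagonal term $-\Lambda^{-1}(z)Y^\top LN=-\Lambda^{-1}(z)Y^\top Q^{-1}JN$, which is the obstruction the border construction is designed to remove.

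For the positivity requirement \eqref{eq:LMI_P} I would apply to $\Pi(z)$ the invertible congruence $T=\mathrm{blkdiag}(e^{\mu z}I,\,I)$, which preserves sign-definiteness and yields $T^\top\Pi(z)T=\left[\begin{smallmatrix}\widehat D P & \widehat D Y^\top\\ \bullet & Q\end{smallmatrix}\right]$ with $\widehat D\coloneqq e^{\mu z}\Lambda^{-1}(z)$, whose $(i,i)$ entry is exactly $s_i(z)\in[\underline s_i,\overline s_i]$; by continuity $\widehat D\in\Co\widehat{\mathbb{G}}$. Since the displayed matrix is linear in $\widehat D$, \eqref{eq:LMI2_V2_design_PiPos} together with convexity yields positivity for every $\widehat D\in\Co\widehat{\mathbb{G}}$, hence $\Pi(z)\succ0$ for all $z\in[0,1]$.

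The core of the argument is \eqref{eq:LMI_Mz}. Comparing $\Xi(\omega,D)$ in \eqref{eq:MatrixXi} with $\Phi(z)$ block by block shows that, under $\omega=e^{-\mu z}$ and $D=\Lambda^{-1}(z)$, the two agree except in the $(1,2)$ block; that is, $\Phi(z)=\Xi(e^{-\mu z},\Lambda^{-1}(z))+\mathcal C$, where $\mathcal C$ is symmetric, supported on the $(1,2)$/$(2,1)$ blocks, and carries the bilinear term $-DY^\top Q^{-1}JN$. For a fixed point $(\omega,D)$, eliminating the two border block rows/columns of the matrices in \eqref{eq:LMI2_V2_design}--\eqref{eq:LMI2_V2_design2} by a Schur complement with respect to $\mathrm{blkdiag}(-\theta Q,-\theta Q)$ shows these LMIs are equivalent to $\Xi(\omega,D)+\tfrac1\theta\Theta(D)Q^{-1}\Theta(D)^\top+\theta Z^\top J^\top Q^{-1}JZ\prec0$; the two correction terms are positive semidefinite and supported on the $(1,1)$ and $(2,2)$ blocks, equal there to $\tfrac1\theta DY^\top Q^{-1}YD$ and $\theta N^\top J^\top Q^{-1}JN$. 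A completion-of-squares estimate, $\left[\begin{smallmatrix}A^\top\\ B^\top\end{smallmatrix}\right][A\ B]\succeq0$ with $A=\theta^{-1/2}Q^{-1/2}YD$ and $B=\theta^{1/2}Q^{-1/2}JN$, identifies these corrections as a matrix $\mathcal D\succeq\mathcal C$; hence the Schur-reduced inequality gives $\Phi=\Xi+\mathcal C\preceq\Xi+\mathcal D\prec0$. Thus, for each $z$, the bordered LMI evaluated at $(e^{-\mu z},\Lambda^{-1}(z))$ already forces $\Phi(z)\prec0$.

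It remains to secure the bordered LMI at every such point from the finitely many conditions at hand. The matrix $\Xi(\omega,D)$ contains the bilinear coupling $-\omega DPS$, so it is not jointly affine in $(\omega,D)$; however it is affine in $\omega$ for fixed $D$ and affine in $D$ for fixed $\omega$, and---crucially---the \emph{bordered} matrices in \eqref{eq:LMI2_V2_design}--\eqref{eq:LMI2_V2_design2} remain affine in $D$ because $\Theta(D)$ enters the border linearly (this is precisely the role of the border: it linearizes the otherwise quadratic $D$-dependence of the correction). Since $e^{-\mu z}\in[e^{-\mu},1]$ and $\Lambda^{-1}(z)\in\Co\mathbb{G}$ for $z\in[0,1]$, I would fix a vertex $D\in\mathbb{G}$ and use affineness in $\omega$ to pass from the two endpoints $\omega\in\{e^{-\mu},1\}$ of \eqref{eq:LMI2_V2_design}--\eqref{eq:LMI2_V2_design2} to the whole interval, then use affineness in $D$ and convexity over $\mathbb{G}$ to reach every $D\in\Co\mathbb{G}$; this delivers the bordered LMI along the curve $(e^{-\mu z},\Lambda^{-1}(z))$ and hence $\Phi(z)\prec0$ for all $z$. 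With \eqref{eq:LMI_P} and \eqref{eq:LMI_Mz} established, Theorem~\ref{thm:StabNonDet} yields UGES. The main difficulty throughout is the nonconvex term $-\Lambda^{-1}(z)Y^\top LN$: recognizing that the border/Schur-complement construction simultaneously realizes the Young bound absorbing it and linearizes the quadratic $D$-dependence, so that a finite vertex test remains sufficient, is the step that makes the proposition work.
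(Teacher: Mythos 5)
Your proof is correct and follows essentially the same route as the paper: the congruence $\mathrm{blkdiag}(e^{\mu z}I,\,I)$ plus vertex convexity to get \eqref{eq:LMI_P}, the biaffine-in-$(\omega,D)$ vertex argument over $[e^{-\mu},1]\times\Co\mathbb{G}$, and the Schur complement with respect to $\mathrm{blkdiag}(-\theta Q,-\theta Q)$ combined with the completion-of-squares bound absorbing the bilinear block $-\Lambda^{-1}(z)Y^\top Q^{-1}JN$, which is precisely the bounding technique of \cite[Proposition 4]{FerranteAUT19} that the paper invokes only by citation. The only differences are cosmetic: you extend the bordered LMIs over the whole parameter box before Schur-reducing pointwise (the paper reduces at the vertices of $\{e^{-\mu},1\}\times\mathbb{G}$ first and then uses biaffineness of $\mathfrak{P}$), and you make explicit the Young/completion-of-squares step that the paper delegates to the reference.
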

\begin{proof}
As a first step, notice that since for all $z\in[0, 1]$, $\Lambda^{-1}(z)e^{\mu z}\in\Co\widehat{\mathbb{G}}$, by following similar arguments as in the proof of Proposition~\ref{prop:designDete}, it follows that the satisfaction of \eqref{eq:LMI2_V2_design_PiPos} assures
$$
\begin{bmatrix}
e^{\mu z}\Lambda^{-1}(z) P&e^{\mu z}\Lambda^{-1}(z) Y^\top\\
\bullet&Q
\end{bmatrix}\succ 0\qquad \forall z\in[0, 1]
$$
which, by a simple congruence transformation, is equivalent to \eqref{eq:LMI_P}.
To conclude the proof, let
$$
\mathfrak{P}(\omega, D)\coloneqq\begin{bmatrix}
\mathfrak{U}(\omega, D)&\mathfrak{G}(\omega, D)\\
\bullet&-\kappa I
\end{bmatrix}
$$
where $\mathfrak{U}(\omega, D)$ and $\mathfrak{G}(\omega, D)$ are defined, respectively, by replacing in $\Upsilon(z)$ and $\Gamma(z)$, $\Lambda^{-1}(z)$ and $e^{-\mu z}$ by $D$ and $\omega$, which gives for all $z\in[0,1]$,
$\mathfrak{P}(e^{-\mu z},\Lambda^{-1}(z))=\Phi(z)$.
Moreover, as pointed out in the proof of Proposition~\ref{prop:designDete},
$\range(z\mapsto\Lambda^{-1}(z))\subset\Co\mathbb{G}$ and similarly
$\range(z\mapsto e^{-\mu z})=[e^{-\mu}, 1]$.
Hence, using the fact that $\mathfrak{P}$ is biaffine with respect to $\omega$ and $D$, it turns out that \eqref{eq:LMI_Mz} holds if
\begin{equation}
\label{eq:VdotNonDete}
\begin{aligned}
&\mathfrak{P}(1, D)\prec 0, &\mathfrak{P}(e^{-\mu}, D)\prec 0,\qquad\forall D\in\mathbb{G}.
\end{aligned}
\end{equation}
The proof can be completed by noting that, using the bounding technique in \cite[Proposition 4]{FerranteAUT19},  \eqref{eq:LMI2_V2_design} and \eqref{eq:LMI2_V2_design2} imply \eqref{eq:VdotNonDete}. 
\end{proof}

\section{Numerical Examples}
\label{sec:NumExamples}
In this section, we showcase the application of the proposed observer in two numerical examples. 
\begin{example}
\label{ex:Exa1}
In this first example, we consider the following data:
$$
A=\left[\begin{array}{ccc} 0 & 1 & 0\\ -1 & 0 & 0\\ 0 & 0 & 0 \end{array}\right],\ B=0,\ E=\left[\begin{array}{c} 0\\ 0\\ 1 \end{array}\right], N=\begin{bmatrix} 1 &0\\ 0&1\end{bmatrix}$$ 
$M=\left[\begin{array}{ccc} 1 & 1 & 1\\ 0 & 1 & 0 \end{array}\right],\ \Lambda(z)=\left[\begin{array}{ccc} 1+z^2&0\\ 0&e^{-z}\end{array}\right], S=\begin{bmatrix}
1\\
0
\end{bmatrix}$, 
$T=\begin{bmatrix}
1&0
\end{bmatrix}, C=\begin{bmatrix}1&1\end{bmatrix}, f(y)=\tanh(y)$.
In particular, for the above selection of $f$, it can be easily shown that $U_1=0$ and $U_2=\frac{1}{2}$; see, e.g., \cite{accikmecse2011observers}.
From \eqref{eq:acca} and \eqref{eq:AlgebraicConstraint} one gets
$$
H^\top=\left[\begin{smallmatrix} 0& 0.333& 0.333 \end{smallmatrix}\right], R=\left[\begin{smallmatrix}1 & 0 & 0\\ -0.333 & 0.333 & -0.333\\ -0.333 & -0.667 & 0.667 \end{smallmatrix}\right]$$ 
At this stage, notice that $\spec(RA)\approx\{0,-0.167\pm j 0.553\}$ and that 
$
\rk\left[\begin{smallmatrix}
RA\\
CM
\end{smallmatrix}\right]=3$,
thereby showing that the pair $(RA, CM)$ is detectable, i.e., Property~\ref{prope:detect} holds. Thus, the design of the observer can be performed via the use of Proposition~\ref{prop:designDete}. Following this approach, we obtain:
$$
\begin{aligned}
&\mu=0.1, \kappa=13.75, P=\left[\begin{smallmatrix}
9.28 & 0\\ 0 & 14.76\end{smallmatrix}\right], L= 0\\
&Q=\left[\begin{smallmatrix} 21.96 & -6.954 & 4.087\\ -6.954 & 16.13 & -1.041\\ 4.087 & -1.041 & 10.13\end{smallmatrix}\right], K_1^\top=\left[\begin{smallmatrix} 0.6597 & 0.4537 & 0.358\end{smallmatrix}\right].
\end{aligned}
$$
Indeed, {as mentioned in Remark~\ref{rem:L=0}}, in this case there is no need of using the output $y_2$. To finalize the design of the observer, we plug the gains $R$ and $K_1$  into \eqref{eq:UIO_F} and \eqref{eq:UIO_K} to get: 
$$
\begin{aligned}
&F=\left[\begin{smallmatrix} -0.6597 & -0.3194 & -0.6597\\ -0.7871 & -1.241 & -0.4537\\ 0.3086 & -1.049 & -0.358
\end{smallmatrix}\right]\\
&K_2^\top=\left[\begin{smallmatrix} -0.3264 & -0.5648 & -0.4692\end{smallmatrix}\right].
\end{aligned}
$$ To validate our theoretical findings, next we present some simulations of the proposed observer\footnote{Numerical integration of hyperbolic PDEs is performed via the use of the Lax-Friedrichs (Shampine's two-step variant) scheme implemented in Matlab\textsuperscript{\tiny\textregistered} by Shampine \cite{shampine2005solving}.}. In these simulations $w(t)=\sin(2t)$ and 
\begin{equation}
\begin{array}{lll}
&x_1(0, z)=0.5(\sin(2\pi z)-1)&\forall z\in[0, 1]\\
&x_2(0, z)=0.5(\sin(4\pi z)-1)&\forall z\in[0, 1]\\
&\chi(0)=(0.5, -0.5, -0.5)\\
&\hat{x}(0, z)=0&\forall z\in[0, 1]\\
&\hat{\chi}(0)=0
\end{array}
\label{eq:init_cond_1}
\end{equation}
The effectiveness of the proposed observer in reconstructing the plant state is confirmed by \figurename~\ref{fig:error_example_1_norm} in which the evolution of norm of the estimation error is reported. 
\end{example}
\begin{figure}[t!]
\centering
\psfrag{t}[][][1]{$t$}
\psfrag{y}[][][0.9]{$\vert (\epx,\epchi) \vert^2$}
\includegraphics[width=0.8\columnwidth, trim={0cm 0.5cm 0cm 0cm},clip]{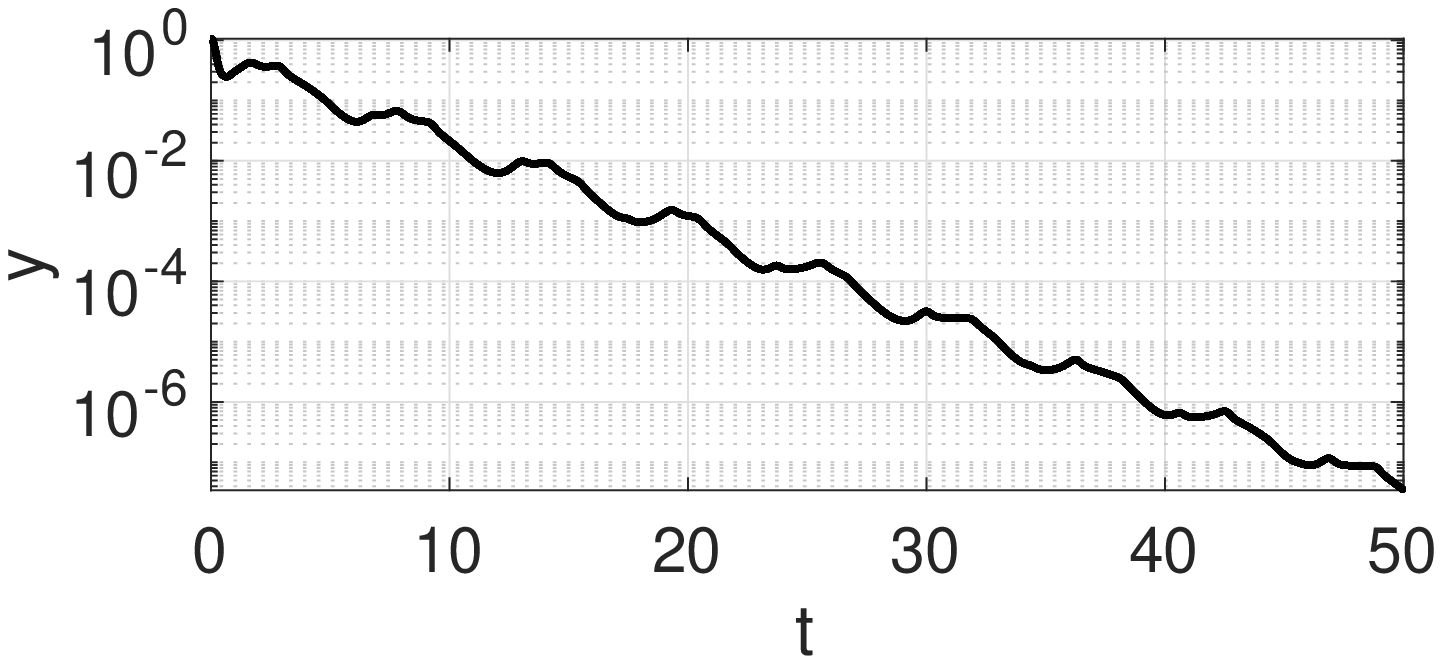}
\caption{Evolution of the squared norm of the estimation error ($y$-axis is in log scale) from the initial condition in \eqref{eq:init_cond_1}.}
\label{fig:error_example_1_norm}
\end{figure}
\begin{example}
In this second scenario, we select $A, B$,$E$,$N$,$S$, $M$, and $f$ as in Example~\ref{ex:Exa1} and
$\Lambda=\begin{bmatrix}\sqrt{2}&0\\  0&2\end{bmatrix}, T=\begin{bmatrix}0&1\end{bmatrix},
C= \begin{bmatrix}1   &  0\end{bmatrix}
$.
From \eqref{eq:acca} one gets $H^\top=\left[\begin{array}{ccc} 0& 0& 1 \end{array}\right]$.
Hence using \eqref{eq:AlgebraicConstraint} with $K_1=0$ yields:
$$
R=\left[\begin{smallmatrix} 1 & 0 & 0\\ 0 & 1 & 0\\ -1 & -1 & 0 \end{smallmatrix}\right], F=\left[\begin{smallmatrix} 0 & 1 & 0\\ -1 & 0 & 0\\ 1 & -1 & 0 \end{smallmatrix}\right], K_2=0$$
Notice that the pair $(RA, CM)$ is not detectable. This can be checked by using the PBH-test of observability: in particular, it easy to verify that
$
\rk\left[\begin{smallmatrix}
RA\pm j I\\
CM
\end{smallmatrix}\right]=2$.
This prevents one from using Proposition~\ref{prop:designDete}  to design an unknown input observer. 
To overcome this problem, we make use of Proposition~\ref{prop:design_non_detect}. Following this second approach, we obtain
$$
\begin{array}{c}
L=\left[\begin{smallmatrix}
-0.01306 & -0.02304\\ 0.008322 & 0.279\\ 0.1568 & -0.3331
\end{smallmatrix}\right], \mu=1, \theta=1$,  $P=\left[\begin{smallmatrix} 9.28 & 0\\ 0 & 14.76\end{smallmatrix}\right],\medskip\\ Y=\left[\begin{smallmatrix}-1.146 & 0.2738\\ -2.177 & -11.58\\ -1.684 & 0.5115\end{smallmatrix}\right]$,  $Q=\left[\begin{smallmatrix} 21.96 & -6.954 & 4.087\\ -6.954 & 16.13 & -1.041\\ 4.087 & -1.041 & 10.13\end{smallmatrix}\right]
\end{array}$$ In these simulations initial conditions and the unknown input $w$ are unchanged with respect to the previous example.
\end{example}
\begin{figure}[t!]
\centering
\psfrag{t}[][][1]{$t$}
\psfrag{y}[][][0.9]{$\vert (\epx,\epchi) \vert^2$}
\includegraphics[width=0.8\columnwidth, trim={0cm 0.5cm 0cm 0cm},clip]{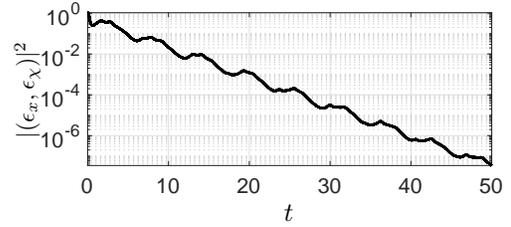}
\caption{Evolution of the squared norm of estimation error ($y$-axis is in log scale) from the initial condition in \eqref{eq:init_cond_1}.}
\label{fig:error_ODE_example_2_norm}
\end{figure}
\section{Conclusion}
The design of unknown input observers (UIO) for a class of coupled PDE/ODE semilinear systems subject to unknown boundary inputs has been addressed in this paper. In particular, distributed nonlinearities are assumed to satisfy an incremental sector boundedness condition.
The structure of the proposed UIOs is analogous to their finite-dimensional version, and the interconnection with the system to be estimated is made by means of injection of the boundary outputs. The synthesis of observer parameters and gains is based on Lyapunov methods and convex optimization techniques. An interesting feature of the considered infinite-dimensional UIO is that, even though the finite-dimensional part lacks of detectability in some cases {(namely, the dynamic system driving the boundary conditions might be not detectable)}, the complete error system may still be made exponentially stable {by using the injection of the right boundary output.} Properties and performance of the observer are illustrated through numerical simulations.
{Future research directions include the use of the proposed estimation scheme to generate fault detection/isolation filters \cite{chen1996design, cristofaro2014fault} and the application of LMI-based techniques to the synthesis of unknown input observers for other classes of distributed parameter systems.}
\bibliographystyle{IEEEtran}
\bibliography{bibTAC_UIO}
\appendix[Auxiliary results and definitions]
We consider abstract semilinear dynamical systems of the form:
\begin{equation}
\label{eq:AbstractODE}
\dot{\zeta}=\mathscr{R}\zeta+\sigma(\upsilon, \zeta)
\end{equation}
where $\zeta\in\mathcal{Z}$ is the system state, $\upsilon\in\mathcal{U}$ is the system input, $\X$ is the state space that we assume to be a separable real Hilbert space endowed with the inner product $\langle\cdot,\cdot\rangle_\X$, $\mathcal{U}$ is the input space, 
$\mathscr{R}\colon\dom\mathscr{A}\subset\X\rightarrow\X$ is a linear operator, and $\sigma\colon\X\times\mathcal{U}\rightarrow\X$ satisfies the following assumptions: 
\begin{itemize}
\item[$(i)$] for all $\zeta\in\X$, $\upsilon\mapsto \sigma(\upsilon, \zeta)$ is continuous; 
\item[$(ii)$] $\sigma$ is globally Lipschitz continuous uniformly in $\upsilon$, i.e., there exists $\ell\geq0$ 
such that for all $\upsilon\in\mathcal{U}$, $\zeta_1, \zeta_2\in\X$
$$
\!\!\!\!\langle \sigma(\upsilon, \zeta_1)- \sigma(\upsilon, \zeta_2),\sigma(\upsilon, \zeta_1)- \sigma(\upsilon, \zeta_2)\rangle\leq\ell\langle \zeta_1-\zeta_2, \zeta_1-\zeta_2\rangle
$$
\end{itemize}
\begin{definition}
\label{def:sol}
A pair $(\varphi, \upsilon)\in\mathcal{C}^0(\dom\varphi; \X)\times\mathcal{L}^0(\dom \upsilon; \mathcal{U})$ is a solution pair to \eqref{eq:AbstractODE} if $\dom\varphi=\dom \upsilon$, $\dom\varphi$ is an interval of $\R_{\geq 0}$ including zero, $\upsilon$ is locally essentially bounded, and for all $t\in\dom\varphi$
$$
\begin{aligned}
&\int_0^t \varphi(s)ds\in\dom\Aop\\
&\varphi(t)=\varphi(0)+ \Aop \int_0^t \varphi(s)ds+
\int_0^t \sigma(\upsilon(s), \varphi(s))ds
\end{aligned}
$$ 
Moreover, we say that $(\varphi, u)$ is maximal if its domain cannot be extended and it is global if $\sup\dom\varphi=\infty$.
\hfill$\circ$
\end{definition}
\medskip

The result given next establishes existence and uniqueness of maximal solution pairs to~\eqref{eq:AbstractODE}.
\begin{proposition}
\label{prop:existence}
Let $\Aop$ be the infinitesimal generator of a $\mathcal{C}_0$-semigroup on $\X$ and let $\sigma$ be globally Lipschitz uniformly in $u$. Then, for any $\xi\in\X$ and $u\in\mathcal{L}^0(\dom u; \mathcal{U})$ locally essentially bounded, there exists a unique maximal solution pair $(\varphi, u)$ to \eqref{eq:AbstractODE} such that $\varphi(0)=\xi$. Moreover, such a pair satisfies:
\begin{equation}
\label{eq:SemigroupRep}
\varphi(t)=\mathscr{T}(t)\xi+\int_{0}^t\mathscr{T}(t-s)\sigma(u(s),\varphi(s))ds\qquad\forall t\in\dom \varphi
\end{equation}
where $\mathscr{T}(t)$ is the $\mathcal{C}_0$-semigroup generated by $\Aop$ on $\mathcal{Z}$. In addition, if $\xi\in\dom\Aop$, $(\varphi, u)$ is a strong solution pair to \eqref{eq:AbstractODE}, namely $\varphi$ is differentiable almost everywhere, $\dot{\varphi}$ is locally integrable, and
\begin{equation}
\label{eq:almost_sol}
\dot{\varphi}(t)=\Aop\varphi(t)+\sigma(u(t), \varphi(t))\qquad\text{for\,\,almost\,\,all}\,\,t\in\dom\varphi
\end{equation}
\end{proposition}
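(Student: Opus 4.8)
The plan is to recast \eqref{eq:AbstractODE} into its \emph{mild} form \eqref{eq:SemigroupRep} and to solve the resulting integral equation by the Banach contraction principle, the crucial leverage being that the Lipschitz constant of $\sigma$ is \emph{global} and \emph{uniform} in $\upsilon$. Let $\mathscr{T}$ be the $\mathcal{C}_0$-semigroup generated by $\Aop$ and fix $M\geq 1$, $\omega\in\R$ with $\Vert\mathscr{T}(t)\Vert\leq Me^{\omega t}$ for all $t\geq 0$. Assumption $(ii)$ gives a Lipschitz constant $\sqrt{\ell}$ for $\zeta\mapsto\sigma(\upsilon,\zeta)$ uniformly in $\upsilon$, while $(i)$, the continuity of $\upsilon\mapsto\sigma(\upsilon,0)$, and the local essential boundedness of $u$ together ensure (Carath\'eodory-type conditions) that $s\mapsto\sigma(u(s),\varphi(s))$ is measurable and locally essentially bounded whenever $\varphi$ is continuous, so that the integral in \eqref{eq:SemigroupRep} is well defined.

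First I would prove local existence and uniqueness. On $\mathcal{C}^0([0,\tau];\X)$ consider
\[
(\mathcal{F}\varphi)(t)\coloneqq\mathscr{T}(t)\xi+\int_0^t\mathscr{T}(t-s)\sigma(u(s),\varphi(s))\,ds .
\]
The semigroup bound and the uniform Lipschitz estimate give, for $\varphi_1,\varphi_2\in\mathcal{C}^0([0,\tau];\X)$,
\[
\Vert\mathcal{F}\varphi_1-\mathcal{F}\varphi_2\Vert_{\mathcal{C}^0}\leq M e^{|\omega|\tau}\sqrt{\ell}\,\tau\,\Vert\varphi_1-\varphi_2\Vert_{\mathcal{C}^0},
\]
so $\mathcal{F}$ is a contraction once $\tau$ is small. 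The decisive point is that this $\tau$ depends only on $M,\omega,\ell$ and neither on $\xi$ nor on the particular trajectory, precisely because the Lipschitz bound is global and uniform in $u$. Banach's theorem then yields a unique fixed point on $[0,\tau]$, i.e.\ a unique local mild solution with $\varphi(0)=\xi$; uniqueness can alternatively be read off a Gr\"onwall argument applied to the difference of two mild solutions.

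Next I would upgrade the local solution to a maximal, and in fact global, one. Since the contraction time $\tau$ does not shrink, the construction can be restarted from $\varphi(\tau),\varphi(2\tau),\dots$, extending the solution by steps of fixed length and patching the local uniqueness statements into uniqueness of the maximal pair. To exclude finite-time blow-up one bounds $\Vert\sigma(u(s),\varphi(s))\Vert\leq\Vert\sigma(u(s),0)\Vert+\sqrt{\ell}\,\Vert\varphi(s)\Vert$ in \eqref{eq:SemigroupRep} and applies Gr\"onwall's inequality to obtain an a priori bound on $\Vert\varphi\Vert$ that is finite on every compact subinterval of $\dom u$. I expect \textbf{this passage from local to global existence} to be the main obstacle: it is where one must combine the non-shrinking existence time with the a priori estimate to force $\sup\dom\varphi=\sup\dom u$, rather than merely a short-time statement.

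Finally, for the strong-solution claim I would take $\xi\in\dom\Aop$ and show that $\varphi$ is locally Lipschitz in $t$: the term $\mathscr{T}(\cdot)\xi$ is continuously differentiable because $\xi\in\dom\Aop$, and the convolution with $f(s)\coloneqq\sigma(u(s),\varphi(s))\in\mathcal{L}^\infty_{\mathrm{loc}}$ is Lipschitz. Since $\X$ is a Hilbert space, hence reflexive and thus endowed with the Radon--Nikodym property, every locally Lipschitz $\X$-valued map is differentiable almost everywhere with locally integrable derivative, so $\dot\varphi$ exists a.e. Rewriting the integral identity of Definition~\ref{def:sol} as $\Aop\int_0^t\varphi(s)\,ds=\varphi(t)-\xi-\int_0^t f(s)\,ds$, a difference-quotient argument that exploits the continuity of $\varphi$ and the closedness of $\Aop$ then shows $\varphi(t)\in\dom\Aop$ for almost every $t$, with $\Aop\varphi(t)$ equal to the a.e.\ derivative of the left-hand side; substituting back yields exactly \eqref{eq:almost_sol}. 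All of the above is classical and can equivalently be obtained by invoking \cite[Theorem 1.2]{pazy2012semigroups} for the mild part together with the reflexivity-based regularity theory for the strong part; here I have only highlighted the structure specific to the uniform global Lipschitz hypothesis.
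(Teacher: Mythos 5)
Your contraction/Gr\"onwall construction of the mild solution is sound and is in fact the part the paper outsources to a ``simple adaptation of \cite[Theorem 1.2, page 185]{pazy2012semigroups}''; the passage from local to global that you flag as the main obstacle is routine under the global uniform Lipschitz bound. The genuine gap is elsewhere: the proposition is about solution \emph{pairs in the sense of Definition~\ref{def:sol}}, i.e.\ integrated solutions satisfying $\int_0^t\varphi(s)\,ds\in\dom\Aop$ and $\varphi(t)=\varphi(0)+\Aop\int_0^t\varphi(s)\,ds+\int_0^t\sigma(u(s),\varphi(s))\,ds$, and you ``recast \eqref{eq:AbstractODE} into its mild form'' without any justification. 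Establishing that the fixed point of \eqref{eq:SemigroupRep} is a solution in the sense of Definition~\ref{def:sol}, and conversely that every integrated solution satisfies \eqref{eq:SemigroupRep} (so that your uniqueness statement actually applies to the class in Definition~\ref{def:sol}), is precisely where the paper spends almost all of its proof: one implication by density of $\dom\Aop$ via \cite{magal2018theory}, the other by applying Fubini's theorem to $\int_0^t\Psi(s)\,ds$ (including the measurability verifications for $(s,\theta)\mapsto\mathscr{T}(s-\theta)\sigma(u(\theta),\varphi(\theta))$), the semigroup identities of \cite[Theorem 2.1.10]{CurtainZwart:95}, and the closedness of $\Aop$ through \cite[Theorem 3.7.12]{hille1996functional} to commute $\Aop$ with the integral. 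Without this bridge your argument proves existence and uniqueness for an integral equation, not for \eqref{eq:AbstractODE} as defined.

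The strong-solution step of your proposal is moreover incorrect as stated. You claim that the convolution $\Psi(t)=\int_0^t\mathscr{T}(t-s)f(s)\,ds$ with $f\coloneqq\sigma(u(\cdot),\varphi(\cdot))\in\mathcal{L}^\infty_{\mathrm{loc}}$ is locally Lipschitz; writing $\Psi(t+h)-\Psi(t)=\int_t^{t+h}\mathscr{T}(t+h-s)f(s)\,ds+(\mathscr{T}(h)-\Id)\Psi(t)$, the second term is $O(h)$ only when $\Psi(t)\in\dom\Aop$ or $f$ enjoys Lipschitz-type time regularity, neither of which is available here. The classical counterexample $f(s)=\mathscr{T}(s)x$ with $x\notin\dom\Aop$ gives $\Psi(t)=t\,\mathscr{T}(t)x$, which for the left-shift semigroup on $\mathcal{L}^2((0,\infty);\R)$ and $x$ failing to be $\mathcal{H}^1$ on every subinterval is locally Lipschitz on no interval and nowhere differentiable; note that this forcing is even realizable within the standing assumptions (take $\sigma(\upsilon,\zeta)=\upsilon$, which is continuous in $\upsilon$ and Lipschitz in $\zeta$ with $\ell=0$, and the continuous input $u(s)=\mathscr{T}(s)x$). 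Hence the Radon--Nikodym/reflexivity argument never gets off the ground, because its hypothesis (local Lipschitz continuity of $\varphi$) fails; a.e.\ differentiability must instead be obtained from extra structure (e.g.\ time-regularity of $s\mapsto f(s)$, $f(s)\in\dom\Aop$ with $\Aop f$ locally integrable, or analyticity of the semigroup). To be fair, the paper's own treatment of this final claim is itself only a sketch, but your specific route is demonstrably not repairable in the generality in which you state it.
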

\begin{proof}
As a first step, we show that under the considered assumptions,  ``integrated solutions'' as in Definition~\ref{def:sol} and solutions to the integral equation \eqref{eq:SemigroupRep} coincide. From \cite[page 218]{magal2018theory}, since $\dom\Aop$ is dense in $\mathcal{X}$ ($\Aop$ is the infinitesimal generator of a strongly continuous semigroup on $\mathcal{X}$) it follows that  any ``integrated solution'' as in Definition~\ref{def:sol} satisfies 
\eqref{eq:SemigroupRep}. Now we show the other implication. 
Let $(\varphi, u)$ be a solution pair to \eqref{eq:SemigroupRep} with $\xi\in\mathcal{X}$. Notice that since $\Aop$ is the infinitesimal generator of a strongly continuous semigroup on $\mathcal{X}$ and $f$ is globally Lipschitz continuous (uniformly in $u$), a simple adaptation of \cite[Theorem 1.2., page 185]{pazy2012semigroups} enables to show that for all $\xi\in\mathcal{X}$ and all $u\in\mathcal{L}^0(\dom u;\mathcal{U})$, there exists a unique $\varphi$ with $\dom\varphi=\dom u$ satisfying \eqref{eq:SemigroupRep}. 
For the sake of exposition, let us rewrite $\varphi$ as follows:
\begin{equation}
\label{eq:solSemi}
\varphi(t)=\mathscr{T}(t)\xi+\Psi(t)\qquad\forall t\in\dom\varphi
\end{equation}
where:
$$
\Psi(t)\coloneqq\int_{0}^t \mathscr{T}(t-\theta)f(u(\theta), \varphi(\theta))d\theta
$$
Observe that $\varphi$ is continuous. Moreover, since $\Aop$ is the infinitesimal generator of the strongly continuous semigroup $\{\mathscr{T}(t)\}_{t\in\R_{\geq 0}}$, from 
\cite[Theorem 2.1.10, page 21]{CurtainZwart:95} it follows that for all $t\in\R_{\geq 0}$
\begin{equation}
\label{eq:SemDom}
\begin{aligned}
&\int_{0}^t\mathscr{T}(s)\xi ds\in\dom\Aop,&\Aop\int_{0}^t\mathscr{T}(s)\xi ds=\mathscr{T}(t)\xi-\xi
\end{aligned}
\end{equation}
Pick any $t\in\dom\varphi$, then one has:
$$
\int_0^t\Psi(s)ds=\int_{0}^t \int_{0}^s \mathscr{T}(s-\theta)f(u(\theta), \varphi(\theta))d\theta ds
$$
We now make use of Fubini's theorem to obtain a simpler expression of the above righthand side integral term.  To this end, observe that $(s,\theta)\mapsto \mathscr{T}(s-\theta)f(u(\theta), \varphi(\theta))$ is Lebesgue measurable with respect to the product $\sigma$-algebra generated by rectangular sets of the form $X\times Y$, with $X, Y\subset\R$ Lebesgue measurable. This can be proven by noticing that $\theta\mapsto \mathscr{T}(s-\theta)f(u(\theta), \varphi(\theta))$ is Lebesgue measurable\footnote{Lebesgue measurability of 
$\theta\mapsto \mathscr{T}(s-\theta)f(u(\theta), \varphi(\theta))$ can be proven by using the fact that $\mathscr{T}$ is strongly continuous and $\theta\mapsto f(u(\theta), \varphi(\theta))$ is locally Lebesgue integrable; see \cite[Example A.5.14, page 625]{CurtainZwart:95}. In fact, local Lebesgue integrability of $\theta\mapsto f(u(\theta), \varphi(\theta))$ follows from $u$ being measurable, $u\mapsto f(u, x)$ being continuous for any $x\in\mathcal{X}$, and $f$ being globally Lipschitz continuous uniformly in $u$.} and that $s\mapsto \mathscr{T}(s-\theta)f(u(\theta), \varphi(\theta))$ is continuous. Moreover, by relying on the fact that $\mathscr{T}$ is strongly continuous and that $\theta\mapsto f(u(\theta), \varphi(\theta))$ is locally Lebesgue integrable, so is $(s,\theta)\mapsto \mathscr{T}(s-\theta)f(u(\theta), \varphi(\theta))$. 
Hence, a direct application of Fubini's theorem, along with a simple change of variables, yields
\begin{equation}
\label{eq:Fubini}
\int_0^t\Psi(s)ds=\int_{0}^t \underbrace{\left(\int_{0}^{t-\theta} \mathscr{T}(\rho)f(u(\theta), \varphi(\theta))d\rho\right)}_{g_t(\theta)} d\theta
\end{equation}
At this stage notice that since $\Aop$ is the infinitesimal generator of the strongly continuous semigroup $\{\mathscr{T}(t)\}_{t\in\R_{\geq 0}}$, from 
\cite[Theorem 2.1.10, page 21]{CurtainZwart:95} it follows that for all $\theta\in[0, t]$, $g_t(\theta)\in\dom\Aop$ and in particular 
\begin{equation}
\Aop g_t(\theta)=\mathscr{T}(t-\theta)f(u(\theta), \varphi(\theta))-f(u(\theta), \varphi(\theta))\quad\forall \theta\in[0, t]
\label{eq:IntegralAg}
\end{equation}
Bearing in mind \eqref{eq:Fubini}, Fubini's theorem enables to conclude that for all $\theta\in[0, t]$, $g_t\in\mathcal{L}_1((0, \theta); \mathcal{X})$ and, thanks to \eqref{eq:IntegralAg} and the assumptions considered on $f$, $u$, and $\mathscr{T}$ being strongly continuous, it turns out that $\Aop g_t\in\mathcal{L}_1((0, \theta); \mathcal{X})$. Thus, since $\Aop$ is closed ($\Aop$ is the infinitesimal generator of a strongly continuous semigroup; see \cite[Theorem 2.1.10, page 21]{CurtainZwart:95}) by applying \cite[Theorem 3.7.12]{hille1996functional} and using \eqref{eq:Fubini}, it follows that for all $t\in\dom\varphi$
\begin{equation}
\label{eq:SemDomBelong}
\int_0^t\Psi(s)ds\overset{\eqref{eq:Fubini}}{=}\int_0^t g_t(\theta)d\theta\in\dom\Aop
\end{equation}
and in particular
\begin{equation}
\label{eq:SemDom2}
\begin{aligned}
&\Aop\int_0^t\Psi(s)ds=\int_0^t\Aop g_t(\theta)d\theta
\end{aligned}
\end{equation}
Therefore, from \eqref{eq:SemDom} and \eqref{eq:SemDomBelong} it follows that for all $t\in\dom\varphi$
$$
\int_0^t\varphi(s)ds\in\dom\Aop
$$
Now we show that $\varphi$ satisfies the identify in Definition~\ref{def:sol}. Let $t\in\dom\varphi$, from \eqref{eq:SemDom2}, by using \eqref{eq:IntegralAg} one gets
$$
\begin{aligned}
\Aop\int_0^t\Psi(s)ds\!&=\\
&\int_0^t (\mathscr{T}(t-\theta)f(u(\theta), \varphi(\theta))-f(u(\theta),\varphi(\theta)))d\theta
\end{aligned}
$$
Hence, by recalling \eqref{eq:solSemi} and combining the above expression with \eqref{eq:SemDom} one has:
$$
\begin{aligned}
\Aop\int_0^t\!\!\!\varphi(s)ds=&\mathscr{T}(t)\xi-\xi\\
&\!\!\!\!\!\!\!\!\!\!\!\!\!\!\!\!+\int_0^t (\mathscr{T}(t-\theta)f(u(\theta),\varphi(\theta))-f(u(\theta), \varphi(\theta)))d\theta
\end{aligned}
$$
Therefore using the expression of $\varphi$ in \eqref{eq:SemigroupRep}, one has
$$
\varphi(t)-\Aop\int_0^t \varphi(s)ds=\xi+\int_0^t f(u(\theta), \varphi(\theta))d\theta
$$
which, recalling that $\varphi(0)=\xi$, corresponds to the identity in Definition~\ref{def:sol}. The steps carried out so far enable to fully replace the notion of solution in Definition~\ref{def:sol} with \eqref{eq:SemigroupRep}. In particular, due to the above mentioned uniqueness of maximal solutions  to \eqref{eq:SemigroupRep}, existence and uniqueness of maximal solutions to \eqref{eq:AbstractODE} is established.

The proof can be simply concluded by showing that if $\xi\in\dom\Aop$, the unique solution to \eqref{eq:SemigroupRep} is differentiable almost everywhere, 
it satisfies \eqref{eq:almost_sol}, and $\dot{\varphi}$ is locally integrable due to 
$$
\varphi(t)=\varphi(0)+\int_{0}^t (\Aop\varphi(s)+f(u(s), \varphi(s)))ds\quad\forall t\in\dom\varphi
$$
namely $\varphi$ is locally absolutely continuous.   
\end{proof}
\begin{definition}
We say that the origin of \eqref{eq:AbstractODE} is uniformly (with respect to $u$) globally exponentially stable (\emph{UGES}) if any maximal solution pair $(\varphi, u)$ to \eqref{eq:AbstractODE} is global and it satisfies:
$$
\vert \varphi(t)\vert\leq \kappa e^{-\lambda t}\vert \varphi(0)\vert,\qquad\forall t\in\dom\varphi
$$ 
for some (solution independent) $\lambda, \kappa>0$.
\end{definition}
The following result provides sufficient condition for UGES. 
\begin{theorem}
\label{thm:GES}
Let $\Aop$ be the infinitesimal generator of a $\mathcal{C}_0$-semigroup on $\mathcal{Z}$. Assume that there exists a Fréchet differentiable functional $V\colon\X\rightarrow\R$ and positive scalars $\alpha_1, \alpha_2, \alpha_3$, and $p$ such that the following items hold:
\begin{itemize}
\item[($i$)] For all $\zeta\in\X$
$$
\alpha_1\vert \zeta\vert^p\leq V(\zeta)\leq\alpha_2\vert \zeta\vert^p
$$ 
\item[($ii$)] For all $\zeta\in\dom\Aop$, $\upsilon\in\mathcal{U}$
$$
DV(\zeta)(\Aop \zeta+\sigma(\upsilon, \zeta))\leq-\alpha_3\vert \zeta\vert^p.
$$ 
\end{itemize}
Then, the origin of  \eqref{eq:AbstractODE} is UGES for \eqref{eq:AbstractODE}.\QEDB
\end{theorem}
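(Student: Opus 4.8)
The plan is to prove the estimate first for \emph{strong} solution pairs, i.e. those with $\varphi(0)=\xi\in\dom\Aop$, and then to transfer it to arbitrary initial data by density and continuous dependence. For a strong solution Proposition~\ref{prop:existence} guarantees that $\varphi$ is locally absolutely continuous, differentiable almost everywhere, and satisfies $\dot{\varphi}(t)=\Aop\varphi(t)+\sigma(u(t),\varphi(t))$ with $\varphi(t)\in\dom\Aop$ for almost all $t$. Along such a trajectory I would differentiate $t\mapsto V(\varphi(t))$: by the chain rule for the Fr\'echet derivative, wherever $\varphi$ is differentiable one has $\tfrac{d}{dt}V(\varphi(t))=DV(\varphi(t))\dot{\varphi}(t)=DV(\varphi(t))\bigl(\Aop\varphi(t)+\sigma(u(t),\varphi(t))\bigr)$, so that item~$(ii)$ yields $\tfrac{d}{dt}V(\varphi(t))\le-\alpha_3|\varphi(t)|^p$ for almost all $t$.

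Combining this with the upper bound in item~$(i)$, namely $|\varphi(t)|^p\ge V(\varphi(t))/\alpha_2$, gives the scalar differential inequality $\tfrac{d}{dt}V(\varphi(t))\le-\tfrac{\alpha_3}{\alpha_2}V(\varphi(t))$. A comparison (Gr\"onwall) argument then produces $V(\varphi(t))\le e^{-\frac{\alpha_3}{\alpha_2}t}V(\varphi(0))$, and sandwiching once more with item~$(i)$ yields $|\varphi(t)|\le(\alpha_2/\alpha_1)^{1/p}e^{-\frac{\alpha_3}{p\alpha_2}t}|\varphi(0)|$, i.e. the claimed bound with $\kappa=(\alpha_2/\alpha_1)^{1/p}$ and $\lambda=\alpha_3/(p\alpha_2)$. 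Since this estimate keeps $|\varphi(t)|$ bounded on its maximal interval, finite-time escape is precluded; together with Proposition~\ref{prop:existence} (which, under the global Lipschitz assumption, delivers $\dom\varphi=\dom u$) this makes every such solution global.

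To cover arbitrary $\xi\in\X$, I would exploit that $\dom\Aop$ is dense in $\X$ (as $\Aop$ generates a $\mathcal{C}_0$-semigroup): choose $\xi_n\in\dom\Aop$ with $\xi_n\to\xi$ and let $\varphi_n,\varphi$ be the solutions associated with the same input $u$ and initial data $\xi_n,\xi$. Subtracting the representations \eqref{eq:SemigroupRep}, bounding $\|\mathscr{T}(t)\|\le Me^{\omega t}$ on each compact time interval, invoking the uniform Lipschitz bound on $\sigma$, and applying Gr\"onwall give $|\varphi_n(t)-\varphi(t)|\le C_T|\xi_n-\xi|$ on $[0,T]$. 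Hence $\varphi_n(t)\to\varphi(t)$ pointwise, and passing to the limit in $|\varphi_n(t)|\le\kappa e^{-\lambda t}|\xi_n|$ yields the same estimate for $\varphi$; the same continuous-dependence bound transfers globality as well.

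The step I expect to be most delicate is the differentiation of $V$ along strong trajectories: one must ensure that $t\mapsto V(\varphi(t))$ is \emph{absolutely continuous}, so that the fundamental theorem of calculus underlying the comparison step is legitimate, rather than merely differentiable almost everywhere. This follows from $\varphi$ being locally absolutely continuous with locally integrable derivative together with the local Lipschitz behaviour of $V$ on the compact set $\varphi([0,T])$; the latter holds in particular because the functionals actually used in Theorems~\ref{th:StabDect} and~\ref{thm:StabNonDet} are quadratic, hence $\mathcal{C}^1$ with locally bounded derivative. Making this regularity rigorous is the main technical point, after which the comparison lemma and the density/Gr\"onwall passage to the limit are routine.
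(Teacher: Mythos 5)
Your proposal is correct and follows essentially the same route as the paper's own proof: establish the exponential estimate for strong solutions by differentiating $V$ along trajectories and invoking the comparison lemma, then extend to arbitrary initial data in $\X$ by density of $\dom\Aop$ together with a Gr\"onwall continuous-dependence bound on the mild-solution representation \eqref{eq:SemigroupRep}. Your closing caveat about the absolute continuity of $t\mapsto V(\varphi(t))$ is resolved in the paper exactly as you suggest (local Lipschitz behaviour of $V$ combined with local absolute continuity of $\varphi$ and local integrability of $\dot{\varphi}$), and your remark that this ultimately rests on local boundedness of $DV$ is, if anything, more careful than the paper's parenthetical assertion that Fr\'echet differentiability alone implies local Lipschitz continuity.
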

\begin{proof}
First we prove UGES of the origin for strong solutions to \eqref{eq:AbstractODE}; see Proposition~\ref{prop:existence}. In particular, pick $\xi\in\dom\Aop$ and $u\in\mathcal{L}^0(\dom u;\mathcal{U})$, and let $(\varphi, u)$ be the unique maximal solution to \eqref{eq:AbstractODE} 
with $\varphi(0)=\xi$. Then, from Proposition~\ref{prop:existence} it follows that $\varphi$ is differentiable almost everywhere and $\dot{\varphi}(t)=\Aop\varphi(t)+f(u(t), \varphi(t))$.
In particular, notice that since strong solutions are solutions in the sense of Definition~\ref{def:sol}, it turns out that for all $t\in\dom\varphi$:
\begin{equation}
\label{eq:AbsCont}
\begin{aligned}
\varphi(t)=&\varphi(0)+\Aop\int_0^t\varphi(s)ds+\int_0^tf(u(s), \varphi(s))ds\\
&=\varphi(0)+\int_0^t\Aop\varphi(s)ds+\int_0^t f(u(s),\varphi(s))ds\\
&=\varphi(0)+\int_0^t \dot{\varphi}(s)ds
\end{aligned}
\end{equation}
where the first identify follows from \cite[Theorem 3.7.12]{hille1996functional} due to $\Aop$ being closed, $\varphi(t)\in\dom\Aop$ for almost all $t\in\dom\varphi$, and $\Aop\varphi$ being locally  integrable.
Namely, \eqref{eq:AbsCont} shows that $\varphi$ is locally absolutely continuous on $\dom\varphi$.
Define for all $t\geq 0$, $W(t)\coloneqq V(\varphi(t))$.
Observe that since $V$ is Fréchet differentiable everywhere and $\varphi$ is differentiable almost everywhere, it follows that for almost all $t\in\dom\varphi$
$$
\dot{W}(t)\coloneqq \frac{d}{dt}W(\varphi(t))=DV(\varphi(t))\dot{\varphi}(t) 
$$
In particular, since $V$ is Fréchet differentiable (and so locally Lipschitz), and $\dot{\varphi}$ is locally integrable, if follows that $W=V\circ\varphi$ is locally absolutely continuous on $\dom\varphi$. Then, since $\varphi$ is a strong solution to \eqref{eq:AbstractODE}, for all almost all $t\dom\varphi$ one has:
$$
\dot{W}(t)=DV(\varphi(t))(\Aop\varphi(t)+f(u(t), \varphi(t)))
$$
Hence, using items $(i)$ and $(ii)$, for almost all $t\in\dom\varphi$ one has:
$$
\dot{W}(t)\leq-\frac{\alpha_3}{\alpha_2}W(t)
$$
which, by using the fact that $W$ is locally absolutely continuous on $\dom\varphi$, from the comparison lemma \cite[Lemma 3.4, page 102]{Khalil} for all $t\in\dom\varphi$, one gets:
$$
W(t)\leq W(0)e^{-\frac{\alpha_3}{\alpha_2}t}\qquad\forall t\dom\varphi
$$
the latter, thanks to item $(i)$, gives:
$$
\vert \varphi(t)\vert_\A\leq \left(\frac{\alpha_2}{\alpha_1}\right)^{\frac{1}{p}} e^{-\frac{\alpha_3}{p\alpha_2} t}\vert \varphi(0)\vert_\A\qquad\forall t\in\dom\varphi
$$
thereby showing uniform global exponential stability of the origin for strong solutions. We now extend the proof to all solutions to \eqref{eq:AbstractODE}. Let $(\varphi, u)$ be a maximal solution to \eqref{eq:AbstractODE}. Since by assumption $\Aop$ is the infinitesimal generator of a $\mathcal{C}_0$ semigroup $\{\mathcal{T}(t)\}_{t\in\R_{\geq 0}}$ on $\X$, from \cite{arendt2011vector}  it follows that $\dom\Aop$ is dense in $\X$. Thus, there exists a sequence $\{\xi_k\}\subset\dom\Aop$ such that $\xi_k\rightarrow\varphi(0)$. Now, let for all $k\in\nats$
$$
\varphi_k(t)=\mathscr{T}(t)\xi_k+\int_{0}^t\mathscr{T}(t-s)f(u(s), \varphi_k(s))\qquad\forall t\dom\varphi
$$
Observe that for all $k\in\nats$, thanks to Proposition~\ref{prop:existence}, $\varphi_k$ is a strong solution to \eqref{eq:AbstractODE}. Thus for all $k$, as shown earlier:
\begin{equation}
\vert\varphi_k(t)\vert\leq \left(\frac{\alpha_2}{\alpha_1}\right)^{\frac{1}{p}} e^{-\frac{\alpha_3}{p\alpha_2} t}\vert \varphi_k(0)\vert\qquad\forall t\in\dom\varphi
\label{eq:BoundGESk}
\end{equation}
Pick any $t\in\dom\varphi$. Then, for all $k\in\mathbb{N}$
$$
\begin{aligned}
&\vert\varphi_k(t)-\varphi(t)\vert=\left\vert\mathscr{T}(t)(\xi_k-\xi)\right.\\
&\qquad\left.+\int_0^t \mathscr{T}(t-s)(f(u(s), \varphi_k(s))-f(u(s), \varphi(s)))ds\right\vert\\
&\qquad\leq M \left\vert \xi_k-\xi\right\vert+M \ell \int_0^t \left\vert \varphi_k(s)-\varphi(s)\right\vert ds
\end{aligned}
$$
where $M\coloneqq\displaystyle\max_{s\in [0, t]}\Vert \mathscr{T}(t)\Vert$. Hence the latter, via Grönwall's inequality, yields
$$
\vert\varphi_k(t)-\varphi(t)\vert\leq M \left\vert \xi_k-\xi\right\vert e^{M\ell t}
$$
This shows that, for all $t\in\dom\varphi$
\begin{equation}
\lim_{k\rightarrow \infty}\vert\varphi_k(t)-\varphi(t)\vert=0
\label{eq:NormConv}
\end{equation}
the latter, combined with \eqref{eq:BoundGESk}, via a simple application of the triangle inequality, enable to show that \eqref{eq:BoundGESk} holds for $\varphi$. This concludes the proof.
\end{proof}
\begin{proof}[Proof of Lemma \ref{lemm:Lip}]
From \eqref{eq:incremental}, one has that for all $y_1, y_2\in\R^{n_t}$
$$
-\delta_f^\top \delta_f-\delta_y^\top \He(S_1S_2)\delta_y+2\delta_y^\top(S_1+S_2)\delta_f\geq 0
$$
where $\delta_f\coloneqq f(y_2)-f(y_1)$ and $\delta_y\coloneqq y_2-y_1$. The latter, thanks to Young's inequality, yields for all $\digamma>0$
$$
\frac{1-\digamma}{\digamma}\delta_f^\top\delta_f+\delta_y^\top\left(-\He(S_1S_2)+\digamma\|S_1+S_2\|\Id
\right)\delta_y\geq 0
$$
Pick $\digamma>\max\{1, \lambda_{\max}(\He(S_1S_2))/\|S_1+S_2\|\}$, then the above inequality yields:
$$
g_1\delta_f^\top\delta_f +g_2\delta_y^\top \delta_y\geq 0
$$
with $g_1\coloneqq\frac{1-\digamma}{\digamma}<0$ and $g_2\coloneqq\lambda_{\max}\left(-\He(S_1S_2)+\digamma\Id
\right)>0
$, which in turn shows that $f$ is globally Lipschitz continuous with Lipschitz constant $\ell=g_2\vert g_1^{-1}\vert$. 
\end{proof}
\end{document}